\newtheorem{theorem}{Theorem}[section]
\newtheorem{lemma}[theorem]{Lemma}
\newtheorem{corollary}[theorem]{Corollary}
\theoremstyle{definition}
\newtheorem{example}[theorem]{Example}
\numberwithin{equation}{section}
\DeclareMathOperator{\tr}{trace}
\newcommand{\T}{{\mathbb T}}
\newcommand{\iy}{\infty}
\newcommand{\al}{\alpha}
\newcommand{\sgn}{{\rm sgn}}
\title[Determinant formulas for Toeplitz plus Hankel matrices]{Determinant formulas for finite Toeplitz plus Hankel matrices with rational symbols}
\author{Estelle Basor}
\address{American Institute of Mathematics, Pasadena, CA, USA}
\email{ebasor@aimath.org}
\author{Kent E. Morrison }
\address{American Institute of Mathematics, Pasadena, CA, USA}
\email{morrison@aimath.org}
\begin{document}

\begin{abstract}
In 1975 K. Michael Day produced an exact formula for the determinants of  finite Toeplitz matrices whose symbols are rational. The answer is a sum that involves powers of the roots of the numerator of the symbol and whose coefficients depend on both the roots of the numerator and denominator. In this paper we prove an analogue of Day's formula for determinants of finite Toeplitz plus Hankel matrices with rational symbols. The key to the proof is an exact formula for the finite determinants that involves a Fredholm determinant that can be explicitly computed. We apply the formula to find information about the limiting eigenvalues of the finite Toeplitz plus Hankel matrices. 
\end{abstract}

\maketitle
\section{introduction}
In 1975 K. Michael Day produced an exact formula for the determinants of  finite Toeplitz matrices whose symbols are rational functions \cite{D75a, D75b}. The answer is a sum that involves powers of the roots of the numerator of the symbol and whose coefficients depend on both the roots of the numerator and denominator. 

The proof given in Day's paper used fairly complicated determinant calculations. A sketch of an alternate proof is given in \cite{BS06} that reduces the problem to an expansion using the Cauchy-Binet formula and Vandermonde determinants. 

In 1952, Szeg\H{o} derived the first result for the asymptotics of determinants of finite Toeplitz matrices whose symbols are real-valued and positive. This result was later generalized to include more general complex-valued symbols and so, in some sense, it overlapped with Day's result. However, Szeg\H{o}'s result was asymptotic and Day's was exact. 

The link between the two is via a more recently discovered exact identity for the determinants, namely the Borodin-Okounkov-Case-Geronimo (BOCG) identity. The identity has a factor which is a Fredholm determinant and in the case of rational symbols has a finite expansion. The terms of the expansion can be matched with the summands in Day's result. A proof using this idea was given in \cite{BC} although the notation for the coefficients is expressed in a different way.

Our goal in this paper is not to re-derive Day's result, but rather to extend his result to the case of determinants of finite Toeplitz plus Hankel (T+H) matrices for rational symbols. And, fortunately, in this setting the analogue of the BOCG identity is known. 

Here is an outline of the paper. In the next section we review the finite Toeplitz case and state the BOCG identity. In the next section we state Day's result and sketch a proof of Day's result using the BOCG identity. We also show how the two formulas for the determinants, that is, the one given by Day and the one found in \cite{BC}, are the same. 

Then in Section 4, we state the analogue of the BOCG identity for determinants of Toeplitz plus Hankel matrices. Using the identity we derive the analogue of Day's result for determinants of the finite Toeplitz plus Hankel matrices with rational symbols. This is the main theorem of the paper. 

This is followed by a section that focuses on numerical computations. Verification of the formula is given along with numerical computation of the eigenvalues of the finite matrices. One of the applications and motivations for Day's formula is that it showed that the limiting measures for the eigenvalues of the finite matrices has support on certain arcs. As we shall see in the (T+H) setting the situation is not the same. The main theorem implies that the eigenvalues can only have limiting values on a finite set of arcs, one of which is the same that arises in the Toeplitz case. The actual pictures of the eigenvalues indicate that the T+H case is the same as the Toeplitz case, but we have not been able to prove this.

\section{Toeplitz  Preliminaries}
We begin with a complex-valued function $\phi$ defined on the unit circle $\T$ with Fourier coefficients
$$\phi_{k} = \frac{1}{2\pi} \int_{0}^{2\pi} \phi(e^{i\theta})\,e^{-ik\theta}\,\,d\theta .$$
Hence $$ \phi(e^{i\theta}) = \sum_{-\infty}^{\infty} \phi_{k} \,e^{i\,k\,\theta}.$$
 Consider the matrix 

$$T_n(\phi)\, = \, (\phi_{j-k})_{j,\,k\, = \,0,\, \cdots,\, n-1}.$$

The matrix $T_{n}(\phi)$ is referred to as the finite Toeplitz matrix with symbol $\phi.$

This matrix has the form 

\[ \left[\begin{array}{ccccc}\phi_{0} & \phi_{-1} & \phi_{-2} &  \cdots&  \phi_{-(n-1)} \\
\phi_{1} & \phi_{0} & \phi_{-1} &  \cdots  & \phi_{-(n-2)} \\
\phi_{2} & \phi_{1} & \phi_{0} & \cdots  & \phi_{-(n-3)} \\
\cdots & \cdots & \cdots & \cdots  & \cdots \\
\cdots & \cdots & \cdots&\cdots & \cdots \\
\phi_{n-1} & \phi_{n-2} & \phi_{n-3} & \cdots & \phi_{0}
\end{array}\right]
\]

As mentioned earlier, for a certain class of ``nice'' symbols the asymptotics were described by Szeg\H{o}and then generalized by others to more general symbols. In particular, the generalization to matrix-valued symbols was proved by Widom \cite{Wi76} and the result now is referred to as  the 
Szeg\H{o}-Widom Limit Theorem. For scalar symbols it states that if 
$\phi$  has a sufficiently well-behaved logarithm then the determinant of the Toeplitz
matrix
has the asymptotic behavior
$$  D_{n}(\phi) := \det T_n(\phi)  \, \sim G(\phi)^n\,E(\phi)\ \ \ {\rm as}\ n\to\iy.$$ 

The constant $G(\phi)$ is the geometric mean of $\phi$
$$G(\phi) = e^{(\log \phi)_0}$$
and
$$ E(\phi)  =  \exp\left(\sum_{k=1}^{\iy}k\,(\log\phi)_k\,(\log\phi)_{-k}\right).$$

To clarify what it means for the symbol of have a sufficiently well-behaved logarithm, it is useful to consider a certain Banach algebra (see \cite{BS06}, section 1.10).

Let $B$ be the set of all function $\phi$ such that the Fourier coefficients satisfy
\[ \| \phi\|_{B} :=\sum_{k =-\iy} ^{\iy} |\phi_{k}| + 
 \Big(\sum_{k = -\iy}^{\iy} |k|\cdot|\phi_{k}|^{2}\Big)^{1/2} < \iy. \]
With this norm  and pointwise defined algebraic operations on $\T,$ the set $B$ becomes a Banach algebra of continuous functions.

The Szeg\H{o}-Widom Limit Theorem  holds provided that
$\phi\in B,$ does  not vanish on $\T,$ and has winding number zero. Then both $\phi^{-1}$ and $\log \phi$ (any continuous choice of the logarithm will suffice) are defined and in $B.$

The BOCG identity is a refinement of the Szeg\H{o}-Widom theorem and yields a direct way to prove it. It requires some additional definitions and notation.

We let $\ell^{2}$ denote the space of complex sequences $\{x_k\}_{k=0}^\infty$ with the usual $2$-norm.
With respect to the standard basis in matrix notation we define two operators, the Toeplitz operator, $T(\phi)$ and the Hankel operator $H(\phi)$ by the following:
\begin{eqnarray*}
 T(\phi) & =&  (\phi_{j-k}), \,\,\,\,\,\,\,\,\,\,\,\,\,0 \leq j,k < \infty,\\
 H(\phi) &=&  (\phi_{j+k+1}), \,\,\,\,\,\,\,\,0 \leq j,k < \infty.
 \end{eqnarray*}
 
 It is useful to think of the Toepltiz operator in matrix form, but it is also worth pointing out that it can be defined as 
 \[ T(\phi) : \ell^{2} \rightarrow \ell^{2} \,\,\,\, \quad T(\phi)f = P(\phi f) \] 
 where $P$ is the Riesz projection onto $\ell^{2},$ 
 \[ P : \sum_{k=-\infty}^{\infty}\phi_{k}e^{ik\theta} \mapsto \sum_{k = 0}^{\infty}\phi_{k}e^{ik\theta} ,\] and $f$ is the function identified by its Fourier coefficients $\{f_{n}\}_{n=0}^{\infty}.$ In other words, start with the sequence $\{f_{n}\}_{n = 0}^{\infty}$ in $\ell^{2}$  and let 
 \[ f(\theta)  = \sum_{n=0}^{\infty} f_{n} e^{i n \theta} .\] Multiply $f$ pointwise by the function $\phi$ and then project by chopping off the negative Fourier coefficients of $\phi f$ to obtain a sequence back in $\ell^{2}.$

For $\phi, \psi \in L^{\infty}(\mathbb{T})$ the well-known identities
\begin{eqnarray}\label{talpro}
T(\phi \psi) & =& T(\phi)T(\psi) +H(\phi)H({\tilde \psi})\\
H(\phi \psi) & =& T(\phi)H(\psi) +H(\phi)T({\tilde \psi}),
 \end{eqnarray}
 where $\tilde{\phi}( e^{i\,\theta}) = \phi (e^{-i\,\theta}),$
are the basis of almost everything that follows.  
They can be proved by simply analyzing the matrix entries of the operators and using convolution to find the Fourier coefficients of products. They are also found in \cite{BS06}, Proposition 2.14.

It follows from these identities that if  $\psi_{-}$ and $\psi_{+} $ have the property that all their Fourier coefficients vanish for $k > 0$ and $k < 0$, respectively, then 
\begin{eqnarray*}
T(\psi_{-} \phi \psi_{+} )& =& T(\psi_{-} )T(\phi) T( \psi_{+} ) ,\\
& &\\
H(\psi_{-} \phi \tilde{\psi}_{+} )& =& T(\psi_{-} )H(\phi) T( \psi_{+} ).
\end{eqnarray*} 

Now suppose that the conditions of the Szeg\H{o}-Widom theorem hold. Then there exists a well-defined logarithm of $\phi$ that is in $B$. We can then  write $\log \phi$ as a sum of two functions, one with Fourier coefficients that vanish for positive indices, and one with Fourier coefficients that vanish for negative indices. If we then exponentiate these two functions, the factors (and their inverses) have the same Fourier properties and are in $B$. Thus we see that
\[ \phi = \phi_{-}\phi_{+}, \,\,\, \phi^{-1} = \phi_{-}^{-1} \phi_{+}^{-1} .\]  This factorization is known as a Wiener-Hopf factorization. 

We need one more ingredient before stating the BOCG identity. This is the notion of a Fredholm determinant, which is defined for any operator of the form $I + K$ where $K$ is a trace class operator. A compact operator $K$ is trace class if and only if
the eigenvalues $s_{i}$ of $(K^{*}K)^{1/2}$ satisfy $\sum_{i=0}^{\infty} s_{i} < \infty.$  We define the trace norm of $K$ by
\[ ||K||_{1} = \sum_{i=0}^{\infty} s_{i}.\] 
This set of operators is a closed ideal in the algebra of all bounded operators. 

For a trace class operator with discrete eigenvalues $\lambda_{i}$ one can show that  
\[ \sum_{i=0}^{\infty}|\lambda_{i}| < \infty \] and thus
\[ \det (I + K) = \prod_{i=0}^{\infty} (1+ \lambda_{i}) \]
is well defined and the product is called the Fredholm determinant.
Proofs of the above statements can be found in \cite{GK}.

The BOCG identity can now be stated as follows.
\begin{theorem}\label{BOCG} 
Suppose $\phi$ is in $B,$ does not vanish, and has winding number zero. Then
\begin{eqnarray*}
D_{n}(\phi) &=& G(\phi)^n \,\,E(\phi) \cdot 
\det\left(I-H(z^{-n}\phi_{-}\phi_{+}^{-1})H(\tilde{\phi}_{-}^{-1} \tilde{\phi}_{+}z^{-n})\right)
\end{eqnarray*}
In the above $z = e^{i\theta}.$
\end{theorem}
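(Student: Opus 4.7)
My plan is to combine the Wiener-Hopf factorization of $\phi$ with a Schur-complement / Sylvester-identity argument to extract the $G(\phi)^n$ factor, and then use the two Toeplitz-Hankel product identities to recognize the residual determinant as the Fredholm determinant in the theorem.

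I would first factor $\phi = \phi_-\phi_+$ with $\phi_-(\infty) = 1$, so $\phi_+(0) = G(\phi)$. The first product identity, together with the vanishing of $H(\phi_-)$ and $H(\tilde\phi_+)$, yields $T(\phi) = T(\phi_-)T(\phi_+)$ and $T(\phi_\pm)^{-1} = T(\phi_\pm^{-1})$. Because $T(\phi_-)$ is upper triangular and $T(\phi_+)$ lower triangular, the same triangularity passes to the compression: $T_n(\phi_\pm)^{-1} = T_n(\phi_\pm^{-1})$. Decomposing $\ell^2 = P_n\ell^2 \oplus Q_n\ell^2$ with $Q_n = I - P_n$ and block-expanding $T(\phi_-)T(\phi_+)$ in this splitting gives
\[ T_n(\phi) = T_n(\phi_-)T_n(\phi_+) + \bigl(P_n T(\phi_-) Q_n\bigr)\bigl(Q_n T(\phi_+) P_n\bigr). \]
Factoring out $\det\bigl(T_n(\phi_-)T_n(\phi_+)\bigr) = G(\phi)^n$ and applying Sylvester's identity $\det(I + AB) = \det(I + BA)$ transfers the remaining determinant to $\det(I + W)$ for an explicit operator $W$ on $Q_n\ell^2$.

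Next I would conjugate to $\ell^2$ via the shift isometry $V: \ell^2 \to Q_n\ell^2$, $Ve_k = e_{k+n}$, which satisfies $V^*V = I$, $VV^* = Q_n$, and the key identity $V^*H(\psi) = H(\psi)V = H(z^{-n}\psi)$. An entry computation identifies the middle factor $V^*T(\phi_+^{-1})P_n T(\phi_-^{-1})V$ as the Hankel product $H(\phi_+^{-1})P_nH(\tilde\phi_-^{-1})$, and the outer Toeplitz factors are rewritten using the second product identity $H(fg) = T(f)H(g) + H(f)T(\tilde g)$ applied with $fg = 1$, giving relations such as $T(\phi_+)H(\phi_+^{-1}) = -H(\phi_+)T(\tilde\phi_+^{-1})$. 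Expanding $P_n = I - Q_n$ in the resulting expression should split $\det(I + V^*WV)$ into a product of an $n$-independent factor and $\det(I - H(z^{-n}\phi_-\phi_+^{-1})H(\tilde\phi_-^{-1}\tilde\phi_+ z^{-n}))$, with the $z^{-n}$ shifts absorbed from the conjugation by $V$ via the identity above.

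The main obstacle is carrying out this separation cleanly and identifying the $n$-independent factor as $E(\phi)$. I would pin this down by two consistency arguments: at $n = 0$ the Hankel product equals $H(\phi_-\phi_+^{-1})H(\tilde\phi_-^{-1}\tilde\phi_+)$, and since $D_0(\phi) = 1$ one recovers Widom's identity $E(\phi)\det(I - H(\phi_-\phi_+^{-1})H(\tilde\phi_-^{-1}\tilde\phi_+)) = 1$; as $n \to \infty$, trace-norm decay of the Hankel product (standard under $\phi \in B$) combined with the Szeg\H{o}-Widom asymptotic $D_n(\phi) \sim G(\phi)^n E(\phi)$ forces the prefactor to be exactly $E(\phi)$. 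Verifying trace-class membership of the Hankel products and estimating the decay of $\|H(z^{-n}\psi)\|_1$ under the $B$-norm hypothesis is routine but needs care.
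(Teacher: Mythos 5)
First, a point of reference: the paper does not prove Theorem \ref{BOCG} at all --- it cites \cite{BW,Bot,GC,BO} and only records that those proofs give the trace-class property. So your proposal has to stand on its own, and it is graded against the standard (Basor--Widom/B\"ottcher) arguments it is clearly gesturing at.

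Your skeleton is the right one: the normalized Wiener--Hopf factorization, the triangularity facts $T_n(\phi_\pm)^{-1}=T_n(\phi_\pm^{-1})$ and $\det T_n(\phi_-)T_n(\phi_+)=G(\phi)^n$, Sylvester's identity to move the correction onto $Q_n\ell^2$, and the shift conjugation $H(\psi)V=V^{*}H(\psi)=H(z^{-n}\psi)$ are all correct (your identification of $V^{*}T(\phi_+^{-1})P_n\,T(\phi_-^{-1})V$ with $H(\phi_+^{-1})P_nH(\tilde\phi_-^{-1})$ is also right, up to a flip $J_n$ on the first $n$ coordinates that cancels between the two factors). The genuine gap is the decisive step: you assert that expanding $P_n=I-Q_n$ makes $\det(I+V^{*}WV)$ ``split into a product of an $n$-independent factor and $\det(I-H(z^{-n}\phi_-\phi_+^{-1})H(\tilde\phi_-^{-1}\tilde\phi_+z^{-n}))$.'' No such factorization falls out of that expansion --- the cross terms do not reorganize into a product of two determinants --- and your two anchors cannot repair this: the $n\to\infty$ argument identifies the constant only \emph{after} one knows the prefactor is independent of $n$, and the $n=0$ check is literally Widom's identity, i.e.\ a special case of what is to be proved. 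The missing ingredient is the operator Jacobi/Schur-complement identity: for invertible $A=I+(\text{trace class})$,
\[ \det\bigl(Q_nAQ_n|_{Q_n\ell^2}\bigr)=\det A\cdot\det\bigl(P_nA^{-1}P_n|_{P_n\ell^2}\bigr). \]
Applied to $A=T(\phi_-\phi_+^{-1})T(\phi_-^{-1}\phi_+)=I-H(\phi_-\phi_+^{-1})H(\tilde\phi_-^{-1}\tilde\phi_+)$, whose inverse $T(\phi_+^{-1})T(\phi)T(\phi_-^{-1})$ compresses (by exactly your triangularity observations) to $T_n(\phi_+)^{-1}T_n(\phi)T_n(\phi_-)^{-1}$ with determinant $G(\phi)^{-n}D_n(\phi)$, this produces in one stroke the $n$-independent factor $\det A$ and the compressed Hankel product; your $n\to\infty$ limit then legitimately identifies $\det A=E(\phi)^{-1}$. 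With that identity inserted, and with the trace-class and decay estimates for $H(z^{-n}\psi)$ under the $B$-norm actually carried out rather than declared routine, your argument closes.
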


We will not prove this here, but refer to past proofs \cite{BW, Bot, GC, BO}. We note that the proofs in \cite{BW, Bot} show that the operator $$H(z^{-n}\phi_{-}\phi_{+}^{-1})H(\tilde{\phi}_{-}^{-1} \tilde{\phi}_{+}z^{-n})$$ is trace class, and thus the determinant on the right hand side of the identity is defined. 
Those proofs also show that the above product of Hankel operators tends to zero in the trace norm which then implies Szeg\H{o}'s asymptotic result. 

\section{Day's result}

We assume that our rational function $\phi$ has no multiple zeros and can be written in the form

$$\phi(e^{i\theta} )= c_{0} \prod_{j = 1}^{p}(e^{i\theta} - r_{j})\prod_{j=1}^{h}( 1 - \frac{e^{i\theta}}{\rho_{j}})^{-1}\prod_{j = 1}^{k}(e^{i\theta}-\delta_{j})^{-1},$$
 where $|\rho_{j}| > 1$, $|\delta_{j}| < 1$, $c_{0} \neq 0$ is a constant, and the $r_{j}$ are the distinct zeros of $\phi.$ Note that if we replace $e^{i\theta}$ by $z$ in the above expression, then $\phi(z)$ is a quotient of polynomials in $z.$ Day's result says the following. 
 
 \begin{theorem}
 Let $\phi$ be given as above. Then
 \begin{equation}
 D_{n}(\phi) = 0 \,\,\mbox{if}\,\, \,p < k, 
 \end{equation}
\begin{equation} \label{Day}
D_{n}(\phi) = (-1)^{(p-k)(n+1)}\sum_{M} A_{M}r^{n}_{M} \,\,\,\mbox{if} \,\,\,p\geq k,
\end{equation}
where the sum in (\ref{Day}) is taken over all subsets $M \subset \{1, \dots, ,p\}$ of cardinality $k$ and 
 $$r_{M} = c_{o} \prod_{j \in M^{c}} r_{j}, \,\,\,A_{M} = \prod_{\substack {j \in M^{c}, \,\alpha \in K \\ \beta \in H, \,i \in M}} \frac{(r_{j} - \delta_{\alpha})(\rho_{\beta} - r_{i})}{(\rho_{\beta} - \delta_{\alpha})(r_{j} - r_{i})},$$
with $M^{c}$ the complement of $M$, $K = \{1, \dots, ,k\}$, $H = \{1, \dots, ,h\}$.    \end{theorem}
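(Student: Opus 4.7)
The plan is to apply the BOCG identity of Theorem~\ref{BOCG} to reduce $D_n(\phi)$ to a product involving a Fredholm determinant, and then exploit the rationality of $\phi$ to collapse that Fredholm determinant to a finite determinant whose Cauchy--Binet expansion produces the sum over $k$-subsets $M$. A preliminary step is to deal with the fact that $\phi$ in general does not have winding number zero (the winding is $\#\{j:|r_j|<1\}-k$), whereas BOCG requires winding zero. I would resolve this by writing $\phi(z) = z^{p-k}\psi(z)$ via the identities $(z-r_j)=z(1-r_j z^{-1})$ and $(z-\delta_j)^{-1}=z^{-1}(1-\delta_j z^{-1})^{-1}$, so that the rewritten symbol $\psi$ has winding zero and admits a clean rational Wiener--Hopf factorization $\psi=\psi_-\psi_+$. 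The $z^{p-k}$ shift changes the Toeplitz determinant by a predictable factor that accounts for the sign $(-1)^{(p-k)(n+1)}$ in Day's formula, and moreover forces $D_n(\phi)=0$ when $p<k$ because the shift then makes $T_n$ a strict truncation of a non-invertible operator.

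With $\psi$ in hand I would examine the two Hankel operators $H(z^{-n}\psi_-\psi_+^{-1})$ and $H(\tilde\psi_-^{-1}\tilde\psi_+ z^{-n})$ in BOCG. The symbols $\psi_-\psi_+^{-1}$ and $\tilde\psi_-^{-1}\tilde\psi_+$ are rational, and each Hankel has finite rank equal to the number of poles of its symbol inside the unit disk. Using partial fractions I would exhibit explicit bases for their ranges as $\ell^2$-vectors indexed by appropriate subsets of $\{r_j,\delta_j,\rho_j\}$, and write each Hankel as a finite sum of rank-one operators $u\otimes v$. The composition $K_n$ then has rank at most $\min(p,k)$, and $\det(I-K_n)$ equals the determinant of a finite matrix $\mathcal{M}_n$ whose entries are geometric sums in $\rho_j^{-1},\delta_j,r_j$ that can be evaluated in closed form because $|\rho_j|>1$ and $|\delta_j|<1$.

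The last step is to apply Cauchy--Binet to expand $\det(I-\mathcal{M}_n)$ as a sum over $k$-subsets $M\subset\{1,\ldots,p\}$. Each term naturally factors as $r_M^n$ (from the $r_j^n$ contributions coming from the $z^{-n}$ in the Hankel arguments) times a rational coefficient built from Vandermonde-like products. Combining with the BOCG prefactor $G(\psi)^n E(\psi)$ and the sign/shift factors from the first paragraph, the coefficient should reduce to $A_M$. The main obstacle is the algebraic bookkeeping: arranging the finite-rank representations so that $\mathcal{M}_n$ has a Cauchy-like form, and verifying by Vandermonde manipulation that the Cauchy--Binet coefficients simplify precisely to the products $\prod(r_j-\delta_\alpha)(\rho_\beta-r_i)/(\rho_\beta-\delta_\alpha)(r_j-r_i)$ appearing in $A_M$. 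The proof sketched in~\cite{BC} proceeds along essentially this outline, although the coefficients are expressed there in an equivalent but differently-organized form.
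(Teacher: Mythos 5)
Your overall strategy --- BOCG, finite\nobreakdash-rank Hankel products for rational symbols, and a Cauchy--Binet/Vandermonde expansion of the resulting finite determinant --- is the same as the paper's (which in turn defers the detailed bookkeeping to \cite{BC}). The genuine gap is in your preliminary normalization. Writing $\phi(z)=z^{p-k}\psi(z)$ via $(z-r_j)=z(1-r_jz^{-1})$ and $(z-\delta_j)^{-1}=z^{-1}(1-\delta_jz^{-1})^{-1}$ does \emph{not} produce a winding-zero symbol: each factor $1-r_jz^{-1}=(z-r_j)/z$ has winding number $-1$ when $|r_j|>1$, so $\psi$ has winding number $-\#\{j:|r_j|>1\}$, which vanishes only when every zero of $\phi$ lies inside the unit circle --- and Day's theorem places no such restriction on the $r_j$ (the sum over all $\binom{p}{k}$ subsets $M$ exists precisely because the $r_j$ may lie on either side of the circle). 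Worse, the claim that the $z^{p-k}$ shift changes the determinant by ``a predictable factor'' is false: $T_n(z^m\psi)$ is the shifted truncation $(\psi_{j-k-m})$, whose determinant has no simple multiplicative relation to $\det T_n(\psi)$; if it did, nonzero winding would be harmless, whereas it is exactly what allows $D_n(\phi)\equiv 0$ when $p<k$. Your one-line justification of that vanishing (``a strict truncation of a non-invertible operator'') is likewise not a proof, since finite truncations of non-invertible operators routinely have nonzero determinants.

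The paper's normalization avoids all of this. It factors $e^{i\theta}$ out of exactly $k$ numerator factors and out of all $k$ factors $(e^{i\theta}-\delta_j)^{-1}$, so the powers of $z$ cancel and no shift occurs; the remaining $p-k$ numerator factors are written as $(-r_j)(1-e^{i\theta}/r_j)$, putting $\phi$ into the form of Theorem~\ref{bc} with $a_j=r_j$ for $j\le k$, $b_j=1/r_{k+j}$, $c_j=\delta_j$, $d_j=1/\rho_j$, and the constant $c_0\prod_{j>k}(-r_j)$ pulled out front (this is where the factor $(-1)^{(p-k)n}$ and the powers $r_M^n$ originate). The modulus restrictions $|a_i|,|b_i|<1$ required by BOCG are then removed not by adjusting winding numbers but by observing that $D_n$ is a polynomial in the $a_i$ and $b_i$, so the identity extends by analytic continuation. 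You should also correct the rank bound: the Fredholm expansion truncates at $l\le\min\{k,p-k\}$ (the ranks of the two Hankel factors), not at $\min(p,k)$.
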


The proof of this using the BOCG identity was done in \cite{BC}. However, the notation was very different from the one that Day used, but it is the notation commonly used in number theory applications for computing averages over the classical groups.  To describe the result, we need a few definitions.

 For sets of complex numbers $A$ and $B$ define
\begin{equation} Z(A,B)=\prod_{a\in A\atop b\in B}(1-ab)^{-1}. \label{Z-def}
\end{equation}
Furthermore,  let
$$Z(A,B;C,D)=\frac{Z(A,B)Z(C,D)}{Z(A,D)Z(B,C)}.$$
 
For sets $A$ and $T$ and a subset $U \subset A$,  we let $A-U+T^{-1}$ be the union of $A - U$ and the set of inverses of the elements of $T$. Finally, for a positive integer $n$, define $U^{n} := \prod_{u\in U}u^{n}.$

\begin{theorem}\label{bc}
Suppose that $p \geq k$ and let 
\[\phi(e^{i \theta}) = \prod_{i=1}^{k}\frac{(1 - a_{i}e^{-i \theta})}{(1 - c_{i}e^{-i \theta})}\prod_{i = 1}^{p - k}(1 - b_{i}e^{i \theta}))\prod_{i = 1}^{h}(1 - d_{i}e^{i \theta})^{-1},
\] with $|c_{i}|, |d_{i}| < 1$ and let 
$A = \{a_{1}, a_{2}, \dots, a_{k}\}, B =  \{b_{1}, b_{2}, \dots, b_{p - k}\} , \\ C = \{c_{1}, c_{2}, \dots, c_{k}\}, D =  \{d_{1}, d_{2}, \dots, d_{h}\}$. Then 
$$ D_{n}(\phi )=  \sum_{S\subset A ,T\subset B\atop |S|=|T|}S^n T^n   Z(A-S+T^{-1},B-T+S^{-1};C,D).
$$
\end{theorem}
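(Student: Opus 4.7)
The plan is to apply the BOCG identity of Theorem \ref{BOCG} to the rational symbol $\phi$ and compute each of the three resulting factors in turn. The Wiener--Hopf factorization is read off directly from the given form by grouping the ``minus'' factors (zeros and poles inside the disk) and ``plus'' factors (zeros and poles outside):
$$\phi_-(z) = \prod_{i=1}^{k}\frac{1 - a_i z^{-1}}{1 - c_i z^{-1}}, \qquad \phi_+(z) = \prod_{i=1}^{p-k}(1 - b_i z)\prod_{i=1}^{h}(1 - d_i z)^{-1},$$
where the stated hypotheses $|c_i|, |d_i| < 1$ (together with the tacit $|a_i|, |b_i| < 1$ required for $\phi, \phi^{-1}$ to lie in $B$ and have winding number zero) guarantee $\phi_\pm^{\pm 1} \in B$. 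Every elementary factor contributes $0$ to $(\log\phi)_0$, so $G(\phi) = 1$. The Fourier coefficients $(\log\phi)_{\pm k}$ for $k \geq 1$ are explicit geometric sums in $A, B, C, D$, and the double sum $\sum_{k\geq 1} k\,(\log\phi)_k(\log\phi)_{-k}$ telescopes via $-\sum_k x^k y^k/k = \log(1-xy)$ into
$$E(\phi) = \frac{\prod_{a\in A, d\in D}(1-ad)\prod_{b\in B, c\in C}(1-bc)}{\prod_{a\in A, b\in B}(1-ab)\prod_{c\in C, d\in D}(1-cd)} = Z(A,B;C,D),$$
which is exactly the $S = T = \emptyset$ term of the claimed sum.

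The substantive ingredient is the Fredholm determinant. Since $\phi_-\phi_+^{-1}$ is rational with its only poles outside the closed unit disk at $z = 1/b_i$, $i = 1,\ldots,p-k$, a partial fraction decomposition (with any polynomial-at-infinity piece affecting only a finite corner of the matrix, which vanishes once $n$ is large enough) gives
$$H(z^{-n}\phi_-\phi_+^{-1}) = \sum_{i=1}^{p-k} \beta_i\, b_i^{n+1}\, u_i \otimes u_i, \qquad u_i = (1, b_i, b_i^2, \ldots) \in \ell^2,$$
where $\beta_i$ is the residue of $\phi_-\phi_+^{-1}$ at $1/b_i$, an explicit rational expression in $A, B, C, D$. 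The symmetric analysis of $\tilde\phi_-^{-1}\tilde\phi_+$, whose outside poles sit at $z = 1/a_j$, yields
$$H(\tilde\phi_-^{-1}\tilde\phi_+\, z^{-n}) = \sum_{j=1}^{k} \gamma_j\, a_j^{n+1}\, v_j \otimes v_j, \qquad v_j = (1, a_j, a_j^2, \ldots).$$
Both Hankels are finite rank, so using $\det(I - XY) = \det(I - YX)$ together with the pairing $(u_i, v_j) = (1 - a_j b_i)^{-1}$, the Fredholm determinant collapses to an explicit finite determinant whose entries are rational in $a_j, b_i$ and carry the powers $a_j^{n+1}, b_i^{n+1}$.

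The final step is to expand this finite determinant by Cauchy--Binet. Each principal minor is indexed by a pair of same-size subsets $S \subset A$, $T \subset B$, and the product of the exponential factors over $S, T$ produces (after absorbing an overall $\prod_{s\in S}s\prod_{t\in T}t$ into the residue coefficients) exactly the weight $S^n T^n$ of the statement. Each minor also contains a Cauchy submatrix with entries $(1 - ab)^{-1}$, $a\in S$, $b\in T$, which by the Cauchy determinant identity evaluates to a ratio of products of differences. The main obstacle, and where almost all of the work lies, is the purely algebraic identification: combining this Cauchy output with the explicit residue products $\beta_i, \gamma_j$ and with the overall prefactor $E(\phi) = Z(A,B;C,D)$, everything must reassemble into the compact form $Z(A - S + T^{-1}, B - T + S^{-1}; C, D)$. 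The substitutions ``delete $S, T$ and adjoin $T^{-1}, S^{-1}$'' must emerge from cancellations between $(1-ab)$-factors, differences $(a - a')$ and $(b - b')$, and the reciprocals $1/a_j, 1/b_i$ hidden in $\beta_i, \gamma_j$; keeping this bookkeeping clean is the crux of the proof.
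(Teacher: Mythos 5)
Your proposal follows essentially the same route as the paper (and the reference [BC] it cites): apply the BOCG identity, observe $G(\phi)=1$ and $E(\phi)=Z(A,B;C,D)$, exploit the finite rank of the two Hankel operators whose Fourier coefficients are finite sums of geometric sequences in the $a_j$ and $b_i$, and expand the resulting determinant over pairs of equal-size subsets $S\subset A$, $T\subset B$ using Cauchy determinant evaluations. Your use of $\det(I-XY)=\det(I-YX)$ plus Cauchy--Binet is only a cosmetic repackaging of the paper's term-by-term Fredholm series expansion for a finite-rank kernel, and like the paper you defer the same final algebraic bookkeeping that reassembles the residues into $Z(A-S+T^{-1},B-T+S^{-1};C,D)$.
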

These formulas look different, especially since the powers seem to be different. But they are really equivalent. 

Start with
$$\phi(e^{i\theta} )= c_{0} \prod_{j = 1}^{p}(e^{i\theta} - r_{j})\prod_{j=1}^{h}( 1 - \frac{e^{i\theta}}{\rho_{j}})^{-1}\prod_{j = 1}^{k}(e^{i\theta}-\delta_{j})^{-1},$$
and factor $e^{i\theta}$ out $k$ times from the numerator and denominator to get
$$\phi(e^{i\theta}) = c_{0} \prod_{j = 1}^{k}(1 - r_{j}e^{-i\theta})\prod_{j = k+1}^{p}(e^{i\theta} - r_{j})\prod_{j=1}^{h}( 1 - \frac{e^{i\theta}}{\rho_{j}})^{-1}\prod_{j = 1}^{k}(1-\delta_{j}e^{-i\theta})^{-1}.$$
Next factor out $\prod_{j = k+1}^{p}(- r_{j}).$ What remains is
$$\phi(e^{i\theta}) = c_{0} \prod_{j = k+1}^{p}(- r_{j})\prod_{j=1}^{k}\frac{(1 - r_{j}e^{-i\theta})}{(1-\delta_{j}e^{-i\theta})}\prod_{j = k+1}^{p}( 1 - e^{i \theta}/r_{j})\prod_{j=1}^{h}( 1 - \frac{e^{i\theta}}{\rho_{j}})^{-1}.$$ 
To see how the powers in Day's expansion agree, notice that if we use Theorem \ref{bc} to compute the determinant, we have a factor of 
$ (\prod_{j = k+1}^{p}(- r_{j}))^{n}$ times a product of $(r_{j})^{n}$ taken from a set $S$ and an equal number of $(1/r_{j})^{n}$ taken from a set $T.$ All possible products from a subset defined by a set $M$ can be obtained this way. We leave it to the reader to check that the coefficients in the two formulas agree.

While the proof in \cite{BC} for Theorem \ref{bc} uses the Fredholm expansion, there are four other known proofs. The first [CFZ] was motivated by physics and uses supersymmetry; the second [BG] uses symmetric function theory and representation theory. The third [CFS] uses  orthogonal polynomial methods from random matrix theory. We also point out that the proof in \cite{BC} required $p = 2k$ and $h = k.$ The proof easily holds without these restrictions or by allowing some of the parameters to be zero. 

The proof of Day's formula using the Fredholm expansion is straight-forward from the BOCG identity.  We will not reproduce all of the proof here, but give the general idea. If we let
\[\phi(e^{i \theta}) = \prod_{i=1}^{k}\frac{(1 - a_{i}e^{-i \theta})}{(1 - c_{i}e^{-i \theta})}\prod_{i = 1}^{p - k}(1 - b_{i}e^{i \theta}))\prod_{i = 1}^{h}(1 - d_{i}e^{i \theta})^{-1},
\] then since the determinants are polynomials in $a_{i}$ and $b_{i}$ we may assume that $|a_{i}|, |b_{i}| < 1.$  Thus we may apply the identity. We find that $G(\phi) = 1$ and by direct computation that $E(\phi) = Z(A, B; C, D).$

So what is left to do is to compute
\[\det\left(I-H(z^{-n}\phi_{-}\phi_{+}^{-1})H(\tilde{\phi}_{-}^{-1} \tilde{\phi}_{+}z^{-n})\right).\]
Recall that if $K$ has a discrete kernel $K(i,j)$ then the determinant of $I - K$ is given by the sum
\[1 +\sum_{l = 1 }^{\infty} \frac{(-1)^{l}}{l!} \left(\sum_{i_{1}, i_{2}, \cdots, i_{l}} \det K(i_{j}, i_{k})_{1 \leq j,k \leq l}\right). \]
The first term here is simply the trace of the operator $K$. The second term is a two-by-two determinant, and so on. 

Our next step is to compute this expansion and thus we need to find the matrix entries of $$H(z^{-n}\phi_{-}\phi_{+}^{-1})H(\tilde{\phi}_{-}^{-1} \tilde{\phi}_{+}z^{-n}).$$

Using complex analysis it is not hard to show that the $l$th Fourier coefficient of $\tilde{\phi}_{-}^{-1} \tilde{\phi}_{+}$ is given by
$ \sum_{j = 1}^{k}\al_{j}a_{j}^{l-1}$ and the $l$th  Fourier coefficient of $\phi_{-}\phi_{+}^{-1}$ is given by
$\sum_{j = 1}^{p-k}\beta_{j}b_{j}^{l-1},$ where $\al_{j}$ and $\beta_{j}$ are constants not depending on $l.$ This shows that the $i,j $ entry of the product of the above Hankel operators is a sum of terms of the form 
\[ \frac{\beta_{g}\al_{h}b_{g}^{n+i}a_{h}^{j+n}}{(1-b_{g}a_{h})}.\] When we substitute this into the formula for the Fredholm determinant we can expand the determinant by the columns. It turns out that only unique sets of $a_{j}$'s and $b_{h}$'s contribute, and these determinants are elementary to compute. For each $l \times l$ determinant we only have non-zero terms if $l \leq \min\{ k, p-k\},$ and if this is the case, then these yield the product factors in Theorem \ref{bc}. For example, a set $S$ with cardinality one will come from the first term, which is just a trace. A set of cardinality two will come from the two-by-two determinant and so on. The details are in \cite{BC}. 

\section{analogue of Day's formula for T + H}

Our goal is to compute the analogue of Day's formula for the determinant of the sum of finite Toeplitz plus Hankel matrices. These are finite matrices denoted by $T_{n}(\phi) + H_{n}(\phi),$ and with $i,j$ entry given by $$\phi_{i-j} + \phi_{i+j+1}.$$ In certain special cases this 
was also done in \cite{BC}. We begin by stating that result.

Let $A=\{a_1,\dots,a_k\}$ and define
\begin{equation} \label{ZOS-def}
   \begin{split} 
       Z_S(A) &=\prod_{1\le i\le j\le k}(1-a_i a_j)^{-1}, \\
       Z_O(A) &=\prod_{1\le i< j\le k}(1-a_i a_j)^{-1}, \\
       Z_O(A;C) &=\frac{Z_O(A)Z_S(C)}{Z(A,C)}.
    \end{split}
\end{equation}
\begin{theorem}\label{t3} Let $A = \{a_{1}, a_{2}, \dots, a_{k}\}$ and let $C = \{c_{1}, a_{2}, \dots, c_{k}\}$ where $|c_i|  <1$. Let
 \[\phi(\theta) =  \prod_{i=1}^{k}\frac{(1 - a_{i}e^{i \theta})(1 - a_{i}e^{-i \theta})}{(1 - c_{i}e^{i \theta})(1 - c_{i}e^{-i \theta})}.\]  
 Then 
 \[ \det (T_{n}(\phi) + H_{n}(\phi))
 = \prod_{i=1}^{k}\frac{(1+c_{i})}{(1+a_{i})}\sum_{U\subset A} U^nZ_O(A-U+U^{-1};C).\]
\end{theorem}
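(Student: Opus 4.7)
The plan is to mirror, in the Toeplitz plus Hankel setting, exactly the strategy used to prove Theorem \ref{bc} from the BOCG identity. First I would invoke the T+H analogue of the BOCG identity (the subject of Section~4 just opened) to express $\det(T_n(\phi)+H_n(\phi))$ as a product of a closed-form prefactor, analogous to $G(\phi)^n E(\phi)$, and a Fredholm determinant $\det(I-K_n)$, where $K_n$ is built from Hankel operators whose symbols involve $z^{-n}$ together with the Wiener-Hopf factors of $\phi$. For the present rational symbol the factorization is transparent: since $\phi$ is symmetric under $\theta\mapsto-\theta$, we may take $\phi_+(z)=\prod_i(1-a_iz)/(1-c_iz)$ and $\phi_-(z)=\prod_i(1-a_i/z)/(1-c_i/z)=\tilde\phi_+$, and by polynomial continuity we may assume $|a_i|<1$ so that the operators in the identity are trace class.

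Next I would evaluate the prefactor. A direct computation gives $G(\phi)=1$, while the ``Szeg\H{o} constant'' for the T+H identity, which in the symmetric case combines $E(\phi)$ with a value-at-$-1$ correction, should yield exactly $\prod_{i=1}^k(1+c_i)/(1+a_i)$. This is plausible because $\phi(-1)=\prod_i(1+a_i)^2/(1+c_i)^2$ is a perfect square and its inverse square root is the claimed constant, so the T+H structure (which has a known tendency to replace $E(\phi)$ by an expression involving $\sqrt{\phi(\pm 1)}$-type boundary values) will produce it.

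The heart of the argument is the expansion of the Fredholm determinant. Using the standard expansion
\[ \det(I-K_n)=1+\sum_{\ell=1}^{\infty}\frac{(-1)^\ell}{\ell!}\sum_{i_1,\dots,i_\ell}\det\bigl(K_n(i_j,i_k)\bigr)_{1\le j,k\le\ell}, \]
I would compute the matrix entries of the relevant Hankel products. A partial-fractions expansion of $\phi_-\phi_+^{-1}$ and of $\tilde\phi_-^{-1}\tilde\phi_+$ shows that their $\ell$-th Fourier coefficients are finite sums of the form $\sum_j\alpha_j a_j^{\ell-1}$, with constants $\alpha_j$ depending on $A$ and $C$ but not on $\ell$. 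Substituting, each matrix entry of $K_n$ becomes a sum of rank-one pieces indexed by pairs $(a_g,a_h)$, so the $\ell\times\ell$ determinants factor through Cauchy-type determinants $\det(1/(1-a_{g_j}a_{h_k}))$; Vandermonde cancellation forces $\{g_j\}=\{h_k\}$, collapsing the double sum over subsets into a single sum over a subset $U\subset A$ of size $\ell$, with the factor $U^n$ coming from the $z^{-n}$ in the Hankel symbols.

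The main obstacle, and the calculation I expect to consume most of the work, is verifying that the coefficient of $U^n$ from this Fredholm expansion agrees precisely with $Z_O(A-U+U^{-1};C)$ as defined in (\ref{ZOS-def}). This is essentially an identity among Cauchy products and requires carefully tracking the $\alpha_j,\beta_j$ constants through the partial fractions, combining them with the Cauchy determinant evaluations, and then recognizing the quotient $Z_O(A)Z_S(C)/Z(A,C)$ after the substitutions $A\mapsto A-U+U^{-1}$. Since the Toeplitz case in \cite{BC} handled the structurally identical (though notationally simpler) task of matching with $Z(A-S+T^{-1},B-T+S^{-1};C,D)$, I would model this bookkeeping on that argument, the only genuinely new ingredient being the single-subset (symplectic/orthogonal-flavored) factor $Z_O$ in place of the double-subset (unitary-flavored) factor $Z$.
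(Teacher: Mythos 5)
Your overall strategy --- apply the T+H analogue of the Szeg\H{o}--Widom/BOCG identity and expand the resulting Fredholm determinant --- is indeed how the paper proceeds: it proves the general non-even version this way and then recovers Theorem \ref{t3} as the corollary $A=B$, $C=D$, where vanishing factors of the form $\prod(s_q-c_p)\prod(s_q-b_p)$ force $S=T\subset A$ and the $Z$-identities of Lemma \ref{Z-properties} collapse the coefficient to $Z_O(A-U+U^{-1};C)$. But the execution you sketch substitutes the Toeplitz BOCG machinery for the T+H one, and that is a genuine gap. In the BE identity (Theorem \ref{BE}) the Fredholm factor is $\det(I+Q_nKQ_n)$ with $K=H(\psi)\bigl(T(\psi^{-1})-H(\tilde{\psi}^{-1})\bigr)$ and $\psi=\phi_-\phi_+^{-1}\tilde{\phi}_+^{-1}$; it is not a product of two Hankel operators with $z^{-n}$ in their symbols, and the functions you propose to expand in partial fractions, $\phi_-\phi_+^{-1}$ and $\tilde{\phi}_-^{-1}\tilde{\phi}_+$, are precisely the ones from the Toeplitz identity --- running that computation reproduces $\det T_n(\phi)$, not $\det(T_n(\phi)+H_n(\phi))$. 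The correct kernel entry is $\sum_{l\ge 0}\psi_{g+l+1}(\psi^{-1}_{l-h}-\psi^{-1}_{-l-h-1})$, whose evaluation requires splitting at $l<h$, $l=h$, $l>h$, and whose unwanted $b^{g+h}$ contribution disappears only because of the nontrivial coefficient identity proved in Appendix B; nothing in your plan produces or anticipates that cancellation.

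Two further points. The determinant that actually arises after summing over permutations is the Cauchy-type determinant $\det\bigl(1/((s_i-t_j)(1-s_it_j))\bigr)$ evaluated in Appendix A, not the ordinary Cauchy determinant $\det(1/(1-a_ga_h))$ you cite; this is where the characteristic factors $(1-t_it_j)(1-s_is_j)$, and hence ultimately the single-set function $Z_O$, come from. And your identification of the prefactor is off: for this symbol the constant $E(\phi)$ in the BE identity equals $\prod_i\frac{1+c_i}{1+a_i}\,Z_O(A;C)$, i.e.\ the entire $U=\emptyset$ term of the claimed sum, not just $\prod_i(1+c_i)/(1+a_i)$; the displayed constant is only extracted as a common factor after the whole sum is assembled. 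The heuristic via $\phi(-1)^{-1/2}$ gives the right boundary factor but misses $Z_O(A;C)$ entirely.
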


The function $\phi$ in the above theorem is ``even,'' that is, $\phi(e^{i\theta}) = \phi(e^{-i\theta}).$ When computing certain averages over cosets of classical groups, one encounters exactly the situation above with even symbols. 

Our goal is to extend this to the non-even setting and fortunately, the analogue of the BOGC identity is known and was proved in \cite{BE4}.

Let 
\[ P_{n} : \sum_{k = 0}^{\infty}\phi_{k}e^{ik\theta} \mapsto \sum_{k = 0}^{n-1}\phi_{k}e^{ik\theta} \] and let $Q_{n} = I - P_{n}.$
\begin{theorem}\label{BE} Suppose $\phi$ satisfies the conditions of the Szeg\H{o}-Widom theorem and in addition $H(\phi)$ and $H(\tilde{\phi})$ are trace class. Let $b = \log \phi$ (any choice of continuous logarithm will do). Then
\begin{eqnarray*}
\det(T_{n}(\phi) + H_{n}(\phi)) &=& G(\phi)^n \,\,E(\phi) \cdot 
\det\left(I +Q_{n}KQ_{n}\right)
\end{eqnarray*}
where $G(\phi)$ is as before, and
\[ E(\phi) = \exp  \tr \left( H(b) - \frac{1}{2}H(b)^{2} + H(b)H(\tilde{b})\right),\] 
\[ K = H(\psi)(T(\psi^{-1}) - H(\tilde{\psi}^{-1})),\] 
\[ \psi = \phi_{-}\phi_{+}^{-1} \tilde{\phi}_{+}^{-1}.\]
\end{theorem}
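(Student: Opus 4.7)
The plan is to mirror the proof of the standard BOCG identity (Theorem~\ref{BOCG}), now using the full power of the Toeplitz/Hankel product identities \eqref{talpro} to account for the extra Hankel contributions that arise in the $T+H$ setting. The first step is to decompose $T(\phi) + H(\phi)$ multiplicatively around the Wiener-Hopf factorization $\phi = \phi_-\phi_+$. Since $\phi_-$ has only non-positive Fourier coefficients, $H(\phi_-) = 0$, so \eqref{talpro} gives both $T(\phi_-\phi_+) = T(\phi_-)T(\phi_+)$ and $H(\phi_-\phi_+) = T(\phi_-)H(\phi_+)$; adding these yields
$$T(\phi) + H(\phi) = T(\phi_-)\bigl(T(\phi_+) + H(\phi_+)\bigr).$$
I would then apply \eqref{talpro} again, pushing $\tilde{\phi}_+$ and $\tilde{\phi}_+^{-1}$ through the right factor, to surface the symbol $\psi = \phi_-\phi_+^{-1}\tilde{\phi}_+^{-1}$. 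It is engineered precisely so that the non-triangular part of the decomposition reassembles into the operator $K = H(\psi)\bigl(T(\psi^{-1}) - H(\tilde{\psi}^{-1})\bigr)$ while the outer triangular parts are pure Toeplitz.

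Next, I would pass to finite sections by writing $T_n(\phi) + H_n(\phi) = P_n(T(\phi) + H(\phi))P_n$ and exploiting the triangular identities $T(\phi_-)P_n = P_n T(\phi_-)P_n$ and $P_n T(\phi_+) = P_n T(\phi_+)P_n$. These give $\det P_n T(\phi_\pm) P_n = \phi_\pm(0)^n$, whose product is $G(\phi)^n$. A block Schur-complement identity applied to the $P_n \oplus Q_n$ splitting converts the remaining compression on $P_n \ell^2$ into a Fredholm determinant supported on $Q_n \ell^2$; after the bookkeeping, this takes the claimed form $\det(I + Q_n K Q_n)$.

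The constant $E(\phi)$ is then pinned down by a deformation argument: along $\phi_t = \exp(tb)$ with $b = \log\phi$, differentiate $\log\det(T_n(\phi_t) + H_n(\phi_t))$ in $t$ using \eqref{talpro} and match against the derivative of the right-hand side. The cross terms $T(\phi)H(\psi)$ and $H(\phi)T(\tilde{\psi})$ in \eqref{talpro} which are absent from the pure Toeplitz case are what produce the non-standard contributions $\tr H(b)$ and $-\tfrac12 \tr H(b)^2$ inside $E(\phi)$; the term $\tr H(b) H(\tilde{b})$ is the familiar Szeg\H{o} piece. The main obstacle is keeping the operator algebra tightly enough organized that $K$ emerges in precisely the stated form rather than as a cousin of it; a subsidiary obstacle is verifying that every Hankel operator appearing along the way is trace class, which is where the extra hypothesis that $H(\phi)$ and $H(\tilde{\phi})$ are trace class enters, propagated through \eqref{talpro} using standard ideal properties of the trace class.
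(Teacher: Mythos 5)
You should first note that the paper does not actually prove Theorem \ref{BE}: it is quoted as a known result, with the proof attributed to \cite{BE4}. So the comparison here is with the strategy of that cited proof, and your outline does follow its general architecture --- Wiener--Hopf factorization, the product identities (\ref{talpro}) with $H(\phi_{-})=0$ to get $T(\phi)+H(\phi)=T(\phi_{-})\bigl(T(\phi_{+})+H(\phi_{+})\bigr)$, a Jacobi/Schur-complement identity to trade the compression to $P_{n}\ell^{2}$ for a determinant on $Q_{n}\ell^{2}$, and the observation that the triangular Toeplitz pieces contribute $G(\phi)^{n}$. That much is sound.

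However, as a proof the proposal has genuine gaps, and you name the largest one yourself. First, the entire content of the theorem is that the remainder operator is \emph{exactly} $K=H(\psi)\bigl(T(\psi^{-1})-H(\tilde{\psi}^{-1})\bigr)$ with $\psi=\phi_{-}\phi_{+}^{-1}\tilde{\phi}_{+}^{-1}$; saying that $\psi$ is ``engineered precisely so that'' the non-triangular part reassembles into $K$ is a restatement of the claim, not a derivation, and your admission that keeping $K$ from coming out as ``a cousin of it'' is the main obstacle concedes that this step is missing. Second, the identification $E(\phi)=\exp\tr\bigl(H(b)-\tfrac12 H(b)^{2}+H(b)H(\tilde{b})\bigr)$ is likewise asserted: the deformation argument would require computing $\tfrac{d}{dt}\log\det$ along $\phi_{t}=e^{tb}$, justifying trace-class differentiability, and handling possible zeros of the determinant along the path; none of the terms, in particular the new $\tr H(b)$ and $-\tfrac12\tr H(b)^{2}$ contributions, is actually produced. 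Third, one of the triangularity identities you invoke points the wrong way for the factorization as written: since $T(\phi_{-})$ is upper triangular one has $T(\phi_{-})P_{n}=P_{n}T(\phi_{-})P_{n}$ but \emph{not} $P_{n}T(\phi_{-})=P_{n}T(\phi_{-})P_{n}$, so in $P_{n}T(\phi_{-})\bigl(T(\phi_{+})+H(\phi_{+})\bigr)P_{n}$ the cross term through $Q_{n}$ does not vanish and must be absorbed into the Schur complement rather than split off; the bookkeeping you defer is exactly where the identity lives. In short: right skeleton, but the two computations that constitute the theorem are not carried out.
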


We will refer to the above identity as the BE identity, as was done in \cite{BC}.
\subsection{The simplest example}

Although the application of the above theorem to finding an exact formula for the determinants is straightforward, it is a bit messy. Thus we
begin with an example. We let 
\begin{equation}   \label{phi_k=1}
\phi(e^{i\theta}) = \frac{(1 - ae^{-i\theta})(1 - b e^{i\theta})}{(1 - c e^{-i\theta})(1 - de^{i\theta})},
\end{equation} 
and we suppose that $a, b, c$ and $d$ all have absolute value less than one. It is then known and not hard to check that the symbol $\phi$ satisfies the conditions of the above theorem. We find
 $G(\phi) = 1$ and $$E(\phi) = \frac{(1-b)(1+d)(1-cb)(1-ad)}{(1-bd)(1-ab)(1-cd)}.$$  
 
 \subsubsection{Computing the entries of $K$ for the example}
 
 The interesting computation is finding the entries of $K.$
 We have
 \[ \psi(e^{i\theta}) =  \frac{(1 - ae^{-i\theta})(1 - d e^{i\theta})(1 - d e^{-i\theta})}{(1 - c e^{-i\theta})(1 - be^{i\theta})(1 - be^{-i\theta})}.\]
Using simple contour integration we find that the following formulas hold and serve to define $\al_{b}, \al_{d1}, \al_{d2}, $ and $\al_{a}$:
\begin{enumerate}
\item $\psi_{k} = \frac{(1-ab)(1-db)(b-d)}{(1-cb)(1-b^{2})}b^{k-1} = \al_{b}b^{k-1},\,\,\,k>0$
\vspace*{.1in}
\item $\psi^{-1}_{k} = \frac{(1-cd)(1-bd)(d-b)}{(1- ad)(1-d^{2})}d^{k-1} =\al_{d1}d^{k-1}, \,\,\,k > 0$
\vspace*{.1in}
\item $\psi^{-1}_{-k} = \frac{(1-bd)(d-c)(d-b)}{(d-a)(1-d^{2})}d^{k-1} +\frac{(1-ab)(a-c)(a-b)}{(1-da)(a-d)}a^{k-1}=\al_{d2}d^{k-1} + \al_{a}a^{k-1}, \,\,\,k > 0$
\vspace*{.1in}
\item$\psi_{0}^{-1} = (\al_{d1} + b)/d$
\vspace*{.1in}
\item $\tilde{\psi}_{k}^{-1} = \psi_{-k}^{-1}.$
\end{enumerate}
\subsubsection{Computing the Fredholm determinant for the simple example}

Our goal now is to compute the Fredholm determinant 
\[\det\left(I+ Q_{n}KQ_{n}\right).
\]
The $j,l$ entry of $H(\psi)$ is $\al_{b}b^{j_+l}.$
Note that $H(\psi)$ is a rank one operator and thus only one term in the Fredholm expansion needs to be evaluated. 
The $l,k$ entry of $T(\psi^{-1}) - H(\tilde{\psi}^{-1})$ is $\psi^{-1}_{l-k} - \tilde{\psi}^{-1}_{l+k+1} .$
From this it follows that the $j,k$ entry of $K$ is 
\[ \al_{b}b^{j}\sum_{l=0}^{k-1}\left(\al_{d2}b^{l}d^{k-l-1} + \al_{a}b^{l}a^{k-l-1}\right) \,+ \,\al_{b}b^{j+k}(\al_{d1} + b)/d \]
\[
+ \al_{b}b^{j}\sum_{l=k+1}^{\infty}b^{l}\al_{d1}d^{l-k-1}\,-\,\al_{b}b^{j}\sum_{l=0}^{\infty}(\al_{d2}b^{l}d^{l+k} + \al_{a}b^{l}a^{l+k}),\]
or
\[\al_{b}\al_{d2}b^{j}d^{k-1}\frac{1 - (b/d)^{k}}{1- b/d} + \al_{b}\al_{a}b^{j}a^{k-1}\frac{1-(b/a)^{k}}{1- b/a}+\al_{b}\al_{d1}b^{j+k}\frac{1}{d(1-bd)} \]\[+ \al_{b}b^{j+k}b/d - \al_{b}\al_{d2}b^{j}d^{k}\frac{1}{1- bd} -\al_{b}\al_{a}b^{j}a^{k}\frac{1}{1- ab}\]
which is the same as
\[\al_{b}\al_{d2}b^{j}d^{k}\frac{(1+b)(1-d)}{(d-b)(1-bd)} + \al_{b}\al_{a}b^{j}a^{k}\frac{(1+b)(1-a)}{(a-b)(1-ab)}\]
\[+ \al_{b}b^{j+k}\left(-\al_{d2}\frac{1}{d-b} - \al_{a}\frac{1}{a-b} +  \al_{d1}\frac{1}{d(1-bd)} + \frac{b}{d}\right)\]
or
\[\al_{b}\al_{d2}b^{j}d^{k}\frac{(1+b)(1-d)}{(d-b)(1-bd)} + \al_{b}\al_{a}b^{j}a^{k}\frac{(1+b)(1-a)}{(a-b)(1-ab)},\] since the last term (coefficient of $b^{j+k}$) adds up to zero.

Because $H(\psi)$ is rank one, we compute the trace of $Q_{n}KQ_{n}$ and find that it is
\[  b^{n}d^{n}\frac{(1-ab)(b-d)(d-c)}{(d-a)(1+d)(1-b)(1-cb)} + b^{n}a^{n}\frac{(1-a)(a-c)(1-bd)(b-d)}{(a-d)(1-ad)(1-cb)(1-b)}.\]

This gives a final answer for the determinant:
\begin{equation} \label{det_formula_k=1}
\begin{split}
\det(T_{n}(\phi) + H_{n}(\phi))  &=\frac{(1-b)(1+d)(1-cb)(1-ad)}{(1-bd)(1-ab)(1-cd)} \\
&+ b^{n}d^{n}\frac{(1-ad)(b-d)(d-c)}{(d-a)(1-bd)(1-cd)} \\
&+ b^{n}a^{n}\frac{(1-a)(a-c)(1+d)(b-d)}{(a-d)(1-ab)(1-cd)}.
\end{split}
\end{equation}

\subsection{Computing the entries of $K$ for the general case}

Now we do the general case.

 From now on we assume that the parameters $a_{1}, \dots, a_{k},$ $b_{1}, \dots, b_{k},$ $d_{1}, \dots, d_{k},$ 
are all distinct. Later we will relax this restriction.

Let 
\[\phi(e^{i\theta}) = \prod_{i=1}^{k}\frac{(1 - a_{i}e^{-i\theta})(1 - b_{i} e^{i\theta})}{(1 - c_{i} e^{-i\theta})(1 - d_{i}e^{i\theta})},\] and assume that all parameters have absolute value less than one so that the BE identity holds.
First,
\[ E(\phi) = 
\frac{\prod_i( 1 -b_{i})(1 +d_{i})\prod_{ i <j }(1- b_{i}b_{j})(1- d_{i}d_{j})\prod_{i,j}(1- a_{i}d_{j})(1- b_{i}c_{j})}{\prod_{i,j}(1- b_{i}d_{j})(1- a_{i}b_{j})(1- c_{i}d_{j})} .
\]

Then we have
 \[ \psi(e^{i\theta}) =  \prod_{i=1}^{k}\frac{(1 - a_{i}e^{-i\theta})(1 - d_{i} e^{i\theta})(1 - d_{i} e^{-i\theta})}{(1 - c_{i}e^{-i\theta})(1 - b_{i}e^{i\theta})(1 - b_{i}e^{-i\theta})}.\] Our next goal is compute the appropriate Fourier coefficients. 
 
 The following definitions help to express the Fourier coefficients concisely.

 \begin{enumerate}
\item $ \al_{b_{j}}\,=\,\prod_{i=1}^{k}\frac{(1-a_{i}b_{j})(1-d_{i}b_{j})(b_{j}-d_{i})}{(1-c_{i}b_{j})(1-b_{i}b_{j})}\prod_{i\neq j}\frac{1}{b_{j}-b_{i}}$
 \vspace*{.1in}
 \item $\al_{d^{+}_{j}}\, =\,\prod_{i=1}^{k}\frac{(1-c_{i}d_{j})(1-b_{i}d_{j})(d_{j}-b_{i})}{(1- a_{i}d_{j})(1-d_{i}d_{j})}\prod_{i\neq j}\frac{1}{d_{j}-d_{i}}$
  \vspace*{.1in}
\item $\al_{d^{-}_{j}}\, =\,\prod_{i=1}^{k}\frac{(1-b_{i}d_{j})(d_{j}-c_{i})(d_{j}-b_{i})}{(d_{j}-a_{i})(1-d_{j}d_{i})}\prod_{i\neq j}\frac{1}{d_{j}-d_{i}}$
  \vspace*{.1in}
  \item $\al_{a_{j}} \, = \,\prod_{i=1}^{k}\frac{(1-a_{j}b_{i})(a_{j}-c_{i})(a_{j}-b_{i})}{(1-d_{i}a_{j})(a_{j}-d_{i})}\prod_{i\neq j}\frac{1}{a_{j}-a_{i}}$
 \end{enumerate}  

The Fourier coefficients are given by:

 \begin{enumerate}
\item $\psi_{n} = \sum_{j=1}^{k} \al_{b_{j}}b_{j}^{n-1},\,\,\,n > 0$
\vspace*{.1in}
\item $\psi^{-1}_{n} = \sum_{j=1}^{k}\al_{d^{+}_{j}}d_{j}^{n-1}, \,\,\,n > 0$
\vspace*{.1in}
\item $\psi^{-1}_{-n} = \sum_{j=1}^{k}(\al_{d^{-}_{j}}d_{j}^{n-1} + \al_{a_{j}}a_{j}^{n-1}), \,\,\,n > 0$
\vspace*{.1in}
\item$\psi_{0}^{-1} =  \sum_{j=1}^{k}(\al_{d^{-}_{j}}d_{j}^{-1} + \al_{a_{j}}a_{j}^{-1}) + \prod_{i=1}^{k}\frac{c_{i}b_{i}}{a_{i}d_{i}}$
\vspace*{.1in}
\item $\tilde{\psi}_{n}^{-1} = \psi_{-n}^{-1}.$
\end{enumerate}

For the matrix entries of $K$ we have
\[ K(g,h) = \sum_{l=0}^{\infty}\psi_{g+l+1}( \psi^{-1}_{l-h} - \psi^{-1}_{-l-h-1}) .\] We consider this in pieces. 
First we consider $$\sum_{l=0}^{\infty}\psi_{g+l+1}\, \psi^{-1}_{l-h}$$ which we break into three pieces depending on whether $l < h,\,\, l = h,$ or $l >h.$
\[ \sum_{l=0}^{h-1}\psi_{g+l+1} \psi^{-1}_{l-h} = \sum_{l=0}^{h-1}\left( \sum_{j=1}^{k} \al_{b_{j}}b_{j}^{g+l}\sum_{j=1}^{k}
(\al_{d^{-}_{j}}d_{j}^{h-l-1} + \al_{a_{j}}a_{j}^{h-l-1}) \right)\]
 which becomes
 \[\sum_{i,j = 1}^{k}\left( \frac{ \al_{b_{i}}\al_{d^{-}_{j}} {b_{i}^{g}}\,d_{j}^{h}}{d_{j}-b_{i}}  - \frac{ \al_{b_{i}}\al_{d^{-}_{j}} {b_{i}^{g+h}}}{d_{j}-b_{i}}\right) + \sum_{i,j=1}^{k}\left( \frac{ \al_{b_{i}}\al_{a_{j}} {b_{i}^{g}}\,a_{j}^{h}}{a_{j}-b_{i}}  - \frac{ \al_{b_{i}}\al_{a_{j}} {b_{i}^{g+h}}}{a_{j}-b_{i}}\right)\]
 The second is the $l=h$ term which is 
 \[\sum_{j=1}^{k} \al_{b_{j}}b_{j}^{g+h} \left(\sum_{j=1}^{k}(\al_{d^{-}_{j}}d_{j}^{-1} + \al_{a_{j}}a_{j}^{-1}) + \prod_{i=1}^{k}\frac{c_{i}b_{i}}{a_{i}d_{i}}\right)\]
 or
 \[\sum_{i,j=1}^{k}b_{i}^{g+h} (\al_{b_{i}}\al_{d^{-}_{j}}d_{j}^{-1} + \al_{b_{i}}\al_{a_{j}}a_{j}^{-1}) +\sum_{i=1}^{k}\al_{b_{i}}b_{i}^{g+h}\prod_{i=1}^{k}\frac{c_{i}b_{i}}{a_{i}d_{i}}. \]
 The next term is $l >h$ and we have
 \[\sum_{i,j=1}^{k}\sum_{l = h+1}^{\infty} \al_{b_{i}}\al_{d^{+}_{j}} b_{i}^{g+l} d_{j}^{l-h-1} = \sum_{i,j=1}^{k}\al_{b_{i}}\al_{d^{+}_{j}}b_{i}^{g+h}\frac{b_{i}}{1- b_{i}d_{j}}.\]

 The final term is 
 \[-\sum_{i,j=1}^{k}\left( \sum_{l=0}^{\infty} \al_{b_{j}}b_{j}^{g+l}(\al_{d^{-}_{j}}d_{j}^{l+h} + \al_{a_{j}}a_{j}^{l+h})\right)\]
 or
 \[-\sum_{i,j=1}^{k}\left( \al_{b_{i}}b_{i}^{g}\left(\al_{d^{-}_{j}}d_{j}^{h}\frac{1}{1-b_{i}d^{-}_{j}} + \al_{a_{j}}a_{j}^{h}\frac{1}{1-b_{i}a_{j}} \right)\right).\]
 Now we combine terms the following results. 
 The terms with $b_{i}^{g}d_{j}^{h}$ reduce to
 \[\sum_{i,j=1}^{k} \al_{b_{i}}\al_{d^{-}_{j}}b_{i}^{g}d_{j}^{h}\frac{(1+b_{i})(1-d_{j})}{(d_{j}-b_{i})(1-b_{i}d_{j})} . \]
 The terms involving $b_{i}^{g}a_{j}^{h}$ reduce to
 \[\sum_{i,j=1}^{k} \al_{b_{i}}\al_{a_{j}}b_{i}^{g}a_{j}^{h}\frac{(1+b_{i})(1-a_{j})}{(a_{j}-b_{i})(1-b_{i}a_{j})} . \]
 The term involving $b_{i}^{g+h}$ is
 \begin{equation} \label{vanishing-term}
 \al_{b_{i}}b_{i}\sum_{j=1}^{k}\left(- \al_{d^{-}_{j}}\frac{1}{d_{j}(d_{j}-b_{i})} - \al_{a_{j}}\frac{1}{a_{j}(a_{j}-b_{i})}   + \al_{d^{+}_{j}}\frac{1}{1- b_{i}d_{j}}\right) +   \al_{b_{i}}\prod_{j=1}^{k}\frac{b_{j}c_{j}}{a_{j}d_{j}}.
 \end{equation}
 
In Appendix B we show that this term vanishes. Hence, what remains is that the $g,h$ entry of our operator $K$ is  \[\sum_{i,j=1}^{k} \al_{b_{i}}\al_{d^{-}_{j}}b_{i}^{g}d_{j}^{h}\frac{(1+b_{i})(1-d_{j})}{(d_{j}-b_{i})(1-b_{i}d_{j})} +
 \sum_{i,j=1}^{k} \al_{b_{i}}\al_{a_{j}}b_{i}^{g}a_{j}^{h}\frac{(1+b_{i})(1-a_{j})}{(a_{j}-b_{i})(1-b_{i}a_{j})} . \]

\subsection{Computing the terms in the Fredholm expansion in the general case}
Now that we know the entries of $K$ our next step is to compute the terms in the Fredholm expansion. We need some preliminaries first.

Let $S$ and $T$ be ordered sets of complex numbers and let $I = \{i_{1}, \dots, i_{l}\}.$ Define 
the $l \times l$ determinant by 
\[ D_{I}(\{t_{i}\}, \{s_{j}\}) = \det (t_{h}^{i_{g}} \,s_{h}^{i_{h}})_{ 1 \leq g, h \leq l} \] 
with $t_{i} \in T$ and $s_{i} \in S.$

Here is the $3 \times 3$ example for sets $\{t_{1}, t_{2}, t_{3}\}$ and $\{s_{1}, s_{2}, s_{3}\}$:

\[ D_{I}(\{t_{i}\}, \{s_{j}\}) = \left|\begin{array}{ccc} t_{1}^{i_{1}}s_{1}^{i_{1}} & t_{2}^{i_{1}}s_{2}^{i_{2}}& t_{3}^{i_{1}}s_{3}^{i_{3}}
 \\ t_{1}^{i_{2}}s_{1}^{i_{1}} & t_{2}^{i_{2}}s_{2}^{i_{2}}& t_{3}^{i_{2}}s_{3}^{i_{3}} \\
 t_{1}^{i_{3}}s_{1}^{i_{1}} & t_{2}^{i_{3}}s_{2}^{i_{2}}& t_{3}^{i_{3}}s_{3}^{i_{3}}\end{array}\right| .\]

The following two lemmas were proved in \cite{BC} and will be used in our computations.

\begin{lemma}
If $t_{i} = t_{j}$ for $i \neq j,$ then $D_{I}(\{t_{i}\}, \{s_{j}\})  = 0.$
\end{lemma}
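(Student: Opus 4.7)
The plan is to exploit the very specific structure of the matrix $(t_h^{i_g}\,s_h^{i_h})_{1\le g,h\le l}$: in the entry at position $(g,h)$, the factor $s_h^{i_h}$ depends only on the column index $h$ (since both the base $s_h$ and the exponent $i_h$ are indexed by $h$), while the row index $g$ enters only through $t_h^{i_g}$. This is visible in the displayed $3\times 3$ example, where within each column the $s$-factor is constant down the rows.

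The first step is to use multilinearity of the determinant to pull $s_h^{i_h}$ out of column $h$ for each $h=1,\dots,l$, yielding
\[
 D_I(\{t_i\},\{s_j\}) \;=\; \Bigl(\prod_{h=1}^{l} s_h^{i_h}\Bigr)\,\det\bigl(t_h^{i_g}\bigr)_{1\le g,h\le l}.
\]
The second step is to examine the reduced matrix $M=(t_h^{i_g})$. Its column $h$ consists of the entries $t_h^{i_1},t_h^{i_2},\dots,t_h^{i_l}$, a function only of the scalar $t_h$. Consequently, if $t_i=t_j$ for some $i\neq j$, then columns $i$ and $j$ of $M$ are literally equal entry-by-entry, so $\det M=0$ and therefore $D_I(\{t_i\},\{s_j\})=0$.

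There is essentially no obstacle here; the only thing to be careful about is parsing the definition correctly so that one sees that $s_h^{i_h}$ really is a column scalar. Once that observation is made, the argument is a one-line application of Vandermonde-style reasoning (two equal columns force vanishing). I would present the proof in exactly the two steps above, with the factorization displayed and the conclusion stated immediately after.
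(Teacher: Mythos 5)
Your proof is correct and is exactly the paper's argument: factor the column scalar $s_h^{i_h}$ out of each column, and observe that the remaining matrix $(t_h^{i_g})$ has two identical columns when $t_i=t_j$. Nothing further is needed.
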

This is easy to see since we can factor out $s_{h}^{i_{h}}$ from any column and the resulting determinant has two equal columns. 

\begin{lemma} For a positive integer $n,$
if $s_{i} = s_{j}$, for $i \neq j,$ then $ \sum_{i_{i}, \dots ,i_{l} \geq n} D_{I}(\{t_{i}\}, \{s_{j}\})  = 0.$
\end{lemma}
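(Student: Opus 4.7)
The plan is to turn the sum of determinants into a closed-form expression that is manifestly antisymmetric in the $s_j$'s, so that equality of any two of them forces it to vanish.

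First, I would factor the common column factor $s_h^{i_h}$ out of the $h$-th column of the matrix defining $D_I$; this peels off the scalar $\prod_h s_h^{i_h}$ and leaves the ``pure $t$'' determinant $\det(t_h^{i_g})_{g,h}$. Expanding this determinant in the usual way as $\sum_\sigma \sgn(\sigma) \prod_g t_{\sigma(g)}^{i_g}$ and combining, each $D_I$ becomes
\[
D_I(\{t_i\},\{s_j\}) \;=\; \sum_\sigma \sgn(\sigma) \prod_{g=1}^l (s_g\, t_{\sigma(g)})^{i_g},
\]
so that the exponents $i_1,\dots,i_l$ have separated into a single product over $g$.

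Next, I would interchange this finite sum over permutations with the infinite sum over $i_1,\dots,i_l \geq n$. In the setting of the paper all parameters have modulus less than one, so each geometric series $\sum_{i_g \geq n}(s_g t_{\sigma(g)})^{i_g}$ converges absolutely and equals $(s_g t_{\sigma(g)})^n/(1-s_g t_{\sigma(g)})$. Using that $\sigma$ is a bijection to pull out the $n$-th powers from the numerators, the result rearranges to
\[
\sum_{i_1,\dots,i_l \geq n} D_I(\{t_i\},\{s_j\})
\;=\; \Bigl(\prod_{g=1}^l s_g^n\Bigr)\Bigl(\prod_{h=1}^l t_h^n\Bigr)\, \det\!\left(\frac{1}{1-s_g t_h}\right)_{1\le g,h\le l}.
\]

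The determinant on the right is of Cauchy type in the parameters $s_g, t_h$, and whenever $s_i = s_j$ for some $i\ne j$ its $i$-th and $j$-th rows are identical, so the determinant vanishes and the lemma follows. No serious obstacle is anticipated. The only point requiring a moment's care is justifying the exchange of the permutation sum with the geometric sum, which is immediate from absolute convergence under the standing hypothesis $|s_g t_h|<1$; alternatively, one can view the displayed identity as a rational identity in the parameters and invoke it whenever both sides make sense.
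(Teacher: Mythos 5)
Your proof is correct and matches the paper's approach: although the paper formally defers this lemma to the reference [BC], the very same computation appears a few lines later in Section 4, where $\sum_{i_1,\dots,i_l\ge n}D_I$ is evaluated by expanding $D_I$ via the permutation definition, summing the geometric series, and recognizing the result as $\prod_j t_j^n s_j^n$ times the Cauchy determinant $\det\bigl(1/(1-s_g t_h)\bigr)$, whose repeated rows force the vanishing. Your closed form and the justification of the interchange of sums under $|s_g t_h|<1$ are both accurate.
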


Now we continue computing the terms in the Fredholm expansion in the general case
Let $A = \{a_{1}, a_{2}, \cdots, a_{k}\}, $
 $B = \{b_{1}, b_{2}, \cdots, b_{k}\}, C = \{c_{1}, c_{2}, \cdots, c_{k}\}$ and $D = \{d_{1}, d_{2}, \cdots, d_{k}\}.$  
 Let $S \subset  E = A + D$ and $ T \subset B.$

The columns of the matrix $K(i_{j},i_{k})$ are sums of $2k^{2}$ column vectors since each entry of $K(j,k)$ is a sum of $2k^{2}$ entries. Expanding the determinant of $K(i_{j},i_{k})$ by columns yields $(2k^{2})^{l}$ terms, but by our lemmas, many of these vanish. Note the column vectors are determined by a choice of a $b_{i}$ and a choice of either an $a_{i}$ or $d_{i}.$ We are only left to deal with disjoint subsets of elements of $s_{i}$ from $A + D$ and $t_{i}$ from $B$. We will call those subsets $S$ and $T.$

Thus the $l \times l$ determinant of  $K(i_{j}, i_{k})$ is a sum of determinants, each of the form 
 \[ \prod_{j = 1}^{l}\al_{t_{j}}\al_{s_{j}}\frac{(1+t_{j})(1-s_{j})}{(s_{j}-t_{j})(1-t_{j}s_{j})}D_{I}(\{t_{i}\}, \{s_{i}\})\] 
 where the sum is taken over all ordered sets $S$ and $T$ of size $l,$
and  $\al_{s_{j}}$ is either $\al_{a_{j}}$ or $\al_{d_{j}^{-}}.$

We begin with two fixed ordered sets $S$ and $T$ and then consider the pair $S$ and $T^{'}$ where $T^{'}$ is a permutation of $T.$
 Then the contribution from all possible permutations of the $t_{j}$'s is 
 \[ \prod_{j = 1}^{l}\al_{t_{j}}\al_{s_{j}} (1+t_{j})(1-s_{j})\,\,(D_{I}(\{t_{i}\}, \{s_{i}\})\left(\sum_{\sigma}\sgn(\sigma)\frac{1}{\prod_{j=1}^{l}(s_{j}-t_{\sigma(j)})(1-t_{\sigma(j)}s_{j})}\right).\]
 Here $D_{I}$ represents the determinant $D_{I}(\{t_{i}\} ,\{s_{i}\})$ for the sets $S$ and $T$. This expression on the right is a Cauchy type determinant. It is evaluated in Appendix A and the evaluation yields
  \[ \prod_{j = 1}^{l}\al_{t_{j}}\al_{s_{j}} (1+t_{j})(1-s_{j}) \,\,(D_{I}(\{t_{i}\}, \{s_{i}\})\frac{\prod_{i < j}(t_{i}-t_{j})(s_{j}-s_{i})(1- t_{i}t_{j})(1-s_{i}s_{j})}{\prod_{i,j}(s_{i}-t_{j})(1-t_{i}s_{j})},\] which must be summed over $ i_{i},\dots, i_{l} \geq n.$
 
  First expand $D_{I}(\{t_{i}\}, \{s_{i}\}$ using the permutation definition. Then sum $\sum_{i_{i},\dots,i_{l} \geq n}D_{I}(\{t_{i}\}, \{s_{i}\})$. The result is $\prod_{j=1}^{l}t_{j}^{n}s_{j}^{n}$ times a Cauchy determinant given by 
  \[ \frac{\prod_{i < j}(t_{j} - t_{i})(s_{j} - s_{i})}{\prod_{i,j}(1-t_{i}s_{j})}.\]
  Thus the contribution after summing in the $l \times l$ Fredholm determinant expansion for the ordered set $S$ and all permutations of $T$ is 
  \[ \prod_{j=1}^{l}t_{j}^{n}s_{j}^{n}\prod_{j = 1}^{l}\al_{t_{j}}\al_{s_{j}} (1+t_{j})(1-s_{j}))\]\[\times \frac{\prod_{i < j}(t_{i}-t_{j})(s_{j}-s_{i})(1- t_{i}t_{j})(1-s_{i}s_{j})}{\prod_{i,j}(s_{i}-t_{j})(1-t_{i}s_{j})}\frac{\prod_{i < j}(t_{j} - t_{i})(s_{j} - s_{i})}{\prod_{i,j}(1-t_{i}s_{j})}.\]
 
 Notice that this expression does not depend on the order $S.$ Hence we have $l!$ occurrences of these which cancel with $1/l!$ in the Fredholm expression.  
 
 Now we are in a position to write down an answer. 
 
Recall that $\al_{s_{j}}$ is either $\al_{a_{j}}$ or $\al_{d_{j}^{-}}.$
However it can be written concisely as  
 $$\al_{s_{j}} = \frac{1}{s_{j} - \bar{s}_{j}} \prod_{i=1}^{k}\frac{(1-b_{i}s_{j})(s_{j}-c_{i})(s_{j}-b_{i})}{(1-s_{j}d_{i})}\prod_{\substack{i: a_{i}\neq s_{j}\\d_{i}\neq s_{j}}}\frac{1}{(s_{j}-a_{i})(s_{j}-d_{i})}$$
where $ \bar{s}_{j}= a_{j}$ if $s_{j} = d_{j}$ and $\bar{s}_{j} = d_{j}$ if $s_{j} = a_{j}.$

We no longer need to consider the order of the sets $S$ and $T$, and so we treat them as ordinary subsets of $E$ and $B.$

For the sets $S$ and $T$ the Fredholm expansion yields a term
\begin{multline} \label{ST} E(\phi)\prod_{i=1}^{l}t_{i}^{n}s_{i}^{n}\prod_{i = 1}^{l}\al_{t_{i}}\al_{s_{i} }(1+t_{j})(1-s_{j}))\\\times \frac{\prod_{i < j}(t_{i}-t_{j})(t_{j} - t_{i})(s_{i}-s_{j})^{2}(1- t_{i}t_{j})(1-s_{i}s_{j})}{\prod_{i,j}(s_{i}-t_{j})(1-t_{i}s_{j})^{2}}.\end{multline}
Recall that 
\[ E(\phi) = \frac{\prod_{i=1}^{k}( 1 -b_{i})(1 +d_{i})\prod_{1\leq i <j \leq k}(1- b_{i}b_{j})(1- d_{i}d_{j})\prod_{i,j}(1- a_{i}d_{j})(1- b_{i}c_{j})}{\prod_{ i,j }(1- b_{i}d_{j})(1- a_{i}b_{j})(1- c_{i}d_{j})}. \]

Thus (\ref{ST}) is
\[ \prod_{i=1}^{l}t_{i}^{n}s_{i}^{n}\prod_{i=1}^{k}( 1 -b_{i})(1 +d_{i})\prod_{1\leq i <j \leq k}(1- b_{i}b_{j})(1- d_{i}d_{j})\prod_{i = 1}^{l}(1+t_{i})(1-s_{i})\al_{t_{i}}\al_{s_{i}} \]
\[\times \frac{\prod_{i,j}(1- a_{i}d_{j})(1- b_{i}c_{j})}{\prod_{i,j}(1- b_{i}d_{j})(1- a_{i}b_{j})(1- c_{i}d_{j})} 
 \frac{\prod_{i < j\leq l}(t_{i}-t_{j})(t_{j} - t_{i})(s_{i}-s_{j})^{2}(1- t_{i}t_{j})(1-s_{i}s_{j})}{\prod_{1\leq i,j \leq l}(s_{i}-t_{j})(1-t_{i}s_{j})^{2}}\] 
where
\[ \al_{t_{j}} = \prod_{p=1}^{k}\frac{(1-a_{p}t_{j})(1-d_{p}t_{j})(t_{j}-d_{p})}{(1-c_{p}t_{j})(1-b_{p}t_{j})}\prod_{p\neq j}\frac{1}{t_{j}-b_{p}}\]
and 
\[\al_{s_{j}} = \frac{1}{s_{j} - \bar{s}_{j}} \prod_{p=1}^{k}\frac{(1-b_{p}s_{j})(s_{j}-c_{p})(s_{j}-b_{p})}{(1-s_{j}d_{p})}\prod_{\substack{p: a_{p}\neq s_{j}\\d_{p}\neq s_{j}}}\frac{1}{(s_{j}-a_{p})(s_{j}-d_{p})}.\]

Changing the $t_{i}$ notation back to $b_{i}$ we can see that many of the terms can be combined to show that (\ref{ST}) is equal to
\begin{gather*}
\prod_{b_{p} \in T}b_{p}^{n}\prod_{s_{p}\in S}s_{p}^{n}\prod_{i=1}^{k}( 1 -b_{i})(1 +d_{i})\prod_{1\leq i <j \leq k}(1- d_{i}d_{j})\prod_{b_{p} \in T}(1+b_{p})\prod_{s_{p}\in S}\frac{(1-s_{p}) }{s_{p} - \bar{s}_{p}} \\
 \times \prod_{\substack{b_{q}\in B-T\\a_{p} \in A} }\frac{1}{(1 - a_{p}b_{q})} 
\prod_{\substack{b_{q}\in B-T\\d_{p} \in  D} }\frac{1}{(1 - d_{p}b_{q})} \prod_{\substack{c_{p} \in C \\ d_{q} \in D}}\frac{1}{(1 - c_{p}d_{q})}\prod_{\substack{s_{q}\in S\\d_{p}\in D}}\frac{1}{(1 - s_{q}d_{p})} \prod_{\substack{b_{p}\in T \\ s_{q}\in S}}\frac{1}{(1 - b_{p}s_{q})} \\
\times \prod_{\substack{s_{p} \neq a_{q}\\s_{p}\in S\\a_{q}\in A}}\frac{1}{s_{p} - a_{q}}\prod_{\substack{s_{p} \neq d_{q}\\s_{p} \in S\\d_{q}\in D}}\frac{1}{s_{p}- d_{q}}
  \prod_{\substack{b_{p}\in B-T\\c_{q}\in C}}(1-b_{p}c_{q}) \prod_{\substack{a_{p}\in A\\d_{q}\in D}}( 1- a_{p}d_{q})
\prod_{\substack{b_{q}\in T\\d_{p}\in D}}(b_{q}-d_{p}) \\
\times\prod_{\substack{s_{q}\in S\\c_{p}\in C}}(s_{q}-c_{p})\prod_{\substack{b_{q}\in T\\b_{p}\in B-T}}\frac{1}{(b_{q}-b_{p})}\prod_{\substack{s_{q}\in S\\b_{p}\in B-T}}(s_{q}-b_{p})\prod_{\substack{p < q \\b_{p},b_{q} \in B-T}}(1 - b_{q}b_{p}) \\
 \times \prod_{b_{q} \in T}\frac{1}{1-b_{q}^{2}} \prod_{\substack{p < q \\s_{p},s_{q} \in S}}(1 - s_{q}s_{p})(s_{q}-s_{p})^{2}\prod_{\substack{b_{p}\in B-T\\s_{q}\in S}}(1-s_{q}b_{p}).
\end{gather*}

 With additional simplification (\ref{ST}) becomes
 \[ (-1)^{l(l-1)/2}\prod_{b_{p} \in T}b_{p}^{n}\prod_{s_{p}\in S}s_{p}^{n}\prod_{i=1}^{k}(1 +d_{i})\prod_{1\leq i <j \leq k}(1- d_{i}d_{j})
 \prod_{s_{q}\in S}(1-s_{q}) \prod_{b_{q}\in B-T}( 1- b_{q}) \]
  \[\prod_{\substack{p < q \\b_{p},b_{q} \in B-T}}(1 - b_{q}b_{p})\prod_{\substack{b_{q}\in T\\b_{p}\in B-T}}\frac{1}{(b_{q}-b_{p})}\prod_{\substack{p < q \\s_{p},s_{q} \in S}}(1 - s_{q}s_{p})\prod_{\substack{s_{q} \in S\\s_{p}\in E-S}}\frac{1}{(s_{q} - s_{p})}\]
  \[ \times \frac
 { \prod_{\substack{b_{p}\in B-T\\c_{q}\in C}}(1-b_{p}c_{q}) \prod_{\substack{a_{p}\in A\\d_{q}\in D}}( 1- a_{p}d_{q})
\prod_{\substack{b_{q}\in T\\d_{p}\in D}}(b_{q}-d_{p})\prod_{\substack{s_{q}\in S\\c_{p}\in C}}(s_{q}-c_{p})
\prod_{\substack{s_{q}\in S\\b_{p}\in B-T}}(s_{q}-b_{p})}
{\prod_{\substack{b_{q}\in B-T\\a_{p} \in A + D - S} }(1 - a_{p}b_{q})\prod_{\substack{c_{p} \in C \\ d_{q} \in D}}(1 - c_{p}d_{q})\prod_{\substack{s_{q}\in S\\d_{p}\in D}}(1 - s_{q}d_{p}) \prod_{\substack{b_{p}\in T \\ s_{q}\in S}}(1 - b_{p}s_{q})}\]

\subsection{Final answer}

The next steps convert the above answer into something that is concise and an analogue of the number theory version of Day's result. To do this we use the following properties, each of which is easy to verify.

\begin{lemma} \label{Z-properties}
Let $Z(A,B), Z_{O}(A)$ be defined as before in (\ref{Z-def}) and (\ref{ZOS-def}). Then
\begin{align*} 
(1)&\; Z(A,B) = Z(B,A) \\
(2)&\; Z(A + B, C) = Z(A,C)Z(B,C) \\
(3)&\; Z_{O}(A + B) = Z_{O}(A)Z_{O}(B)Z(A,B) \\
(4)&\; \prod_{a \in A, b \in B} \frac{1}{a-b} = \prod_{a \in A} a^{-|B|} \prod_{a \in A, b \in B}\frac{1}{1-b/a} 
     = \prod_{a \in A} a^{-|B|} Z(A^{-1},B) \\
(5)&\; Z(A,B) = (-1)^{|A|\,|B|}\prod_{a \in A}a^{-|B|}\prod_{b \in B} b^{-|A|} Z(A^{-1},B^{-1}) \\
(6)&\; Z_O(A) =  (-1)^{|A|(|A|-1)/2}\prod_{a \in A} a^{-|A|+1}Z_{O}(A^{-1})
\end{align*}
\end{lemma}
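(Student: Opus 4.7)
The plan is to verify each of the six identities by direct algebraic manipulation of the defining products; no outside input is needed, and I would present them in the given order since (3)--(6) reuse the additive and commutative structure made explicit in (1)--(2).

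For (1) and (2) the key observation is that the factor $(1-ab)$ is symmetric in its arguments, which gives (1) immediately, and that the index set in $Z(A+B,C)$ splits as a disjoint union $A\times C \sqcup B\times C$, so the product factors cleanly into $Z(A,C)\cdot Z(B,C)$. For (3) the unordered pairs $\{x,y\}$ with $x\ne y$ in $A+B$ partition into pairs contained in $A$, pairs contained in $B$, and mixed pairs with one element in each; the first two contributions give $Z_O(A)$ and $Z_O(B)$, and because $(1-xy)^{-1}$ is symmetric the mixed contribution collapses to the ordinary double product $\prod_{a\in A, b\in B}(1-ab)^{-1}=Z(A,B)$.

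For (4)--(6) I would apply the two elementary rewrites
\[
\frac{1}{a-b}=\frac{1}{a}\cdot\frac{1}{1-b/a},\qquad \frac{1}{1-ab}=\frac{-1}{ab}\cdot\frac{1}{1-a^{-1}b^{-1}},
\]
factor by factor. In (4), each $a\in A$ appears in exactly $|B|$ of the factors, producing $\prod_{a\in A}a^{-|B|}$, while the remaining product matches $Z(A^{-1},B)$ by the definition. In (5), applied to $Z(A,B)$, every pair contributes a sign, yielding $(-1)^{|A||B|}$; every $a\in A$ appears in $|B|$ pairs giving $a^{-|B|}$; every $b\in B$ appears in $|A|$ pairs giving $b^{-|A|}$; and the leftover assembles into $Z(A^{-1},B^{-1})$. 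In (6) the same rewrite applied to unordered pairs $i<j$ produces $\binom{|A|}{2}$ sign factors and, since each $a_i$ appears in exactly $|A|-1$ such pairs, the stated expression $(-1)^{|A|(|A|-1)/2}\prod_{a}a^{-(|A|-1)}Z_O(A^{-1})$ drops out.

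There is no genuine obstacle; the only thing requiring care is the exponent bookkeeping in (5) and (6), namely confirming that each element of $A$ appears in $|B|$ ordered pairs in (5) and in $|A|-1$ unordered pairs in (6). Once that counting is carried out, the six identities are essentially reindexing statements combined with the two rewrites above.
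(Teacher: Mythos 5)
Your verification is correct, and it matches the paper's intent exactly: the paper offers no written proof, simply asserting that each of the six properties "is easy to verify," and your factor-by-factor reindexing together with the rewrites $\frac{1}{a-b}=\frac{1}{a}\cdot\frac{1}{1-b/a}$ and $\frac{1}{1-ab}=\frac{-1}{ab}\cdot\frac{1}{1-a^{-1}b^{-1}}$ is precisely the intended elementary argument. The sign and exponent bookkeeping in (5) and (6) checks out ($|A|\,|B|$ ordered pairs, $\binom{|A|}{2}$ unordered pairs, each element appearing in $|B|$ resp.\ $|A|-1$ of them), so nothing further is needed.
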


Using properties (1)-(4) we can express (\ref{ST}) as
\[ (-1)^{l(l-1)/2}\prod_{b_{p} \in T}b_{p}^{n}\prod_{s_{p}\in S}s_{p}^{n}\prod_{i=1}^{k}(1 +d_{i})
 \prod_{s_{q}\in S}(1-s_{q}) \prod_{b_{q}\in B-T}( 1- b_{q}) \]
 \[\times \frac{\prod_{b_{q} \in T}b_{q}^{l-k}Z(T^{-1}, B-T)\prod_{s_{q} \in S}s_{q}^{l-2k}Z(S^{-1}, E-S)}
 {Z_{O}(D)Z_{O}(B-T)\,\,Z_{O}(S)}\]
 \[ \times \frac{Z(E - S,B-T)Z(C,D)Z(S,D)Z(T,S)}{Z(B-T, C)Z(A,D)\prod_{i}b_{i}^{-k}Z(T^{-1},D) \prod_{s_{q} \in S}s_{q}^{-k}Z(S^{-1}, C)\prod_{s_{q} \in S}s_{q}^{l-k}Z(S^{-1}, B-T)  }  \]

With properties (5) and (6) this can be rewritten as 
\[ (-1)^{l}\prod_{b_{p} \in T}b_{p}^{n}\prod_{s_{p}\in S}s_{p}^{n}\prod_{i=1}^{k}(1 +d_{i})
 \prod_{s_{q}\in S}(1-s_{q}) \prod_{b_{q}\in B-T}( 1- b_{q}) \]
 \[\times \frac{\prod_{b_{q} \in T}b_{q}^{l-k}Z(T^{-1}, B-T)\prod_{s_{q} \in S}s_{q}^{l-2k}Z(S^{-1}, E-S)}
 {Z_{O}(D)Z_{O}(B-T)\,\,\prod_{s_{q} \in S}s_{q}^{-l+1}Z_{O}(S^{-1})}\]
 \[ \times \frac{Z(E - S,B-T)Z(C,D)Z(D,D)Z(S,D)\prod_{b_{q} \in T}b_{q}^{-l}\prod_{s_{q} \in S}s_{q}^{-l}Z(T,^{-1}S^{-1})}{Z(B-T, C)Z(D,D)Z(A,D)\displaystyle\prod_{b_q \in T}b_{q}^{-k}Z(T^{-1},D) \displaystyle\prod_{s_{q} \in S}s_{q}^{-k}Z(S^{-1}, C)\displaystyle\prod_{s_{q} \in S}s_{q}^{l-k}Z(S^{-1}, B-T)  }.  \]

Then using properties (2) and (3) of Lemma \ref{Z-properties} and the fact that 
\[ \frac{Z(S,D)}{Z(A,D)Z(D,D)} = \frac{1}{Z(E - S, D)} \] the expression for (\ref{ST}) 
further simplifies to

\[(-1)^{l} \prod_{\substack{s_{i} \in S\\b_{i} \in T}} s_{i}^{n-1}b_{i}^{n} 
\prod_{d_{i} \in D}(1 + d_{i})\prod_{s_{i} \in S}(1 -s_{i})\prod_{b_{i} \in B-T}(1 -b_{i})\]
\[ \times \frac{Z_{S}(D)Z(C,D)Z(E - S +T^{-1}, B-T +S^{-1})}{Z(E - S +T^{-1},D)Z(B-T +S^{-1},C)Z_{O}(B-T +S^{-1})} .\]

This is the term in the Fredholm expansion corresponding to the sets $S$ and $T$. Summing over $S$ and $T$ gives the the full expansion for $\det(T_{n}(\phi) + H_{n}(\phi))$, which we summarize in the following theorem.
\begin{theorem}
Let $A = \{a_{1}, \dots, a_{k}\}$, $B =  \{b_{1}, \dots, b_{k}\}$, $C = \{c_{1}, \dots, c_{k}\}$, 
$D = \{d_{1}, \dots, d_{k}\}$, and $E = A + D$. Assume $|c_i|, |d_i| <1,$ and that $d_{1}, \dots, d_{k}$ are distinct. Let
\[\phi(e^{i \theta}) = \prod_{i=1}^{k}\frac{(1 - a_{i}e^{-i \theta})(1 - b_{i}e^{i \theta})}{(1 - c_{i}e^{-i \theta})(1 - d_{i}e^{i \theta})}. \]
Then 
\begin{multline*}
\det(T_{n}(\phi) + H_{n}(\phi))  \\
   =  \sum_{\substack{S \subset E\\ T \subset B\\ |S|  = |T|}}(-1)^{|S|} \prod_{\substack{s_{i} \in S\\b_{i} \in T}} s_{i}^{n-1}b_{i}^{n} 
\prod_{d_{i} \in D}(1 + d_{i})\prod_{s_{i} \in S}(1 -s_{i})\prod_{b_{i} \in B-T}(1 -b_{i}) \\
\times \frac{Z_{S}(D)Z(C,D)Z(E - S +T^{-1}, B-T +S^{-1})}{Z(E - S +T^{-1},D)Z(B-T +S^{-1},C)Z_{O}(B-T +S^{-1})}.
\end{multline*}
\end{theorem}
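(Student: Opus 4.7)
The plan is to apply the BE identity of Theorem \ref{BE} to reduce the determinant to the Szeg\H{o}--Widom constants $G(\phi) = 1$ and $E(\phi)$ times a Fredholm determinant $\det(I + Q_n K Q_n)$, and then to expand the Fredholm determinant term by term, evaluate each term with a Cauchy-type identity, and repackage the result with the $Z$, $Z_O$, $Z_S$ shorthands. Because $\phi$ is rational with distinct poles, I can compute everything in closed form by partial-fraction/contour-integral calculations.

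First I would compute $E(\phi)$ from its defining formula involving $\log\phi$, which for our rational $\phi$ is a finite sum of products over the parameter sets $A, B, C, D$; the resulting expression is the one stated in the excerpt and will eventually become, after regrouping, the prefactor $Z_S(D) Z(C,D)/Z(\cdot,D)Z(\cdot,C)Z_O(\cdot)$. Next, from $\psi = \phi_-\phi_+^{-1}\tilde{\phi}_+^{-1}$ I would expand $\psi$, $\psi^{-1}$, $\tilde{\psi}^{-1}$ in partial fractions in $e^{i\theta}$, read off Fourier coefficients as geometric sums, and define the coefficients $\alpha_{b_j}, \alpha_{d_j^{\pm}}, \alpha_{a_j}$ that isolate the contributions of the poles $b_j, d_j, a_j$. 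Combining these into $K(g,h) = \sum_{l \ge 0} \psi_{g+l+1}(\psi^{-1}_{l-h} - \psi^{-1}_{-l-h-1})$ gives four geometric sums which I would collect into three blocks: a $b_i^g d_j^h$ block, a $b_i^g a_j^h$ block, and a residual $b_i^{g+h}$ block.

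The first key obstacle is to prove that the diagonal $b_i^{g+h}$ block \eqref{vanishing-term} vanishes identically. I would prove this by treating the expression in brackets as a meromorphic function of $b_i$ and checking it has no poles at the $a_j, d_j$ (the residues cancel between the $\alpha_{a_j}/(a_j(a_j-b_i))$, $\alpha_{d_j^-}/(d_j(d_j-b_i))$, and $\alpha_{d_j^+}/(1-b_id_j)$ contributions) and then checking the value at a convenient limit (say $b_i \to 0$) against the constant $\prod c_j b_j/(a_j d_j)$; alternatively, one can sum residues of a single generating rational function and invoke the fact that the sum of residues at all finite poles equals minus the residue at infinity. Once this is done, $K(g,h)$ is a clean rank-$2k^2$ outer-product sum in the variables $t \in B$, $s \in A \cup D$.

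Having $K$ in outer-product form, I would expand $\det(I + Q_n K Q_n) = 1 + \sum_{l \ge 1} \frac{(-1)^l}{l!} \sum_{i_1,\dots,i_l \ge n} \det(K(i_j,i_k))$ by multilinearity in columns, getting a sum over ordered tuples in $B$ and in $A \cup D$. The two lemmas following the definition of $D_I$ restrict the nonvanishing contributions to disjoint subsets $T \subset B$ and $S \subset E = A + D$ of equal size $l$. For fixed ordered $S$ and $T$, summing over the permutations $\sigma$ of $T$ produces a Cauchy-type determinant $\det\bigl(1/[(s_i - t_{\sigma(j)})(1 - t_{\sigma(j)} s_i)]\bigr)$, which I would evaluate by the identity given in Appendix A; summing the remaining $D_I$ over $i_1,\dots,i_l \ge n$ produces a second Cauchy determinant times $\prod t_j^n s_j^n$. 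The $l!$ cancels the over-counting of ordered $S$, so $S$ and $T$ can now be regarded as unordered subsets.

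Finally I would carry out the simplification. This is the most laborious but most routine step: substitute the explicit $\alpha_{t_j}, \alpha_{s_j}$, collect the many double products, and repeatedly apply the six identities of Lemma \ref{Z-properties} — using (2) and (3) to merge union/symmetric-difference factors, (4) to convert products of differences into $Z$'s of inverses, and (5)--(6) to pull out the powers of $b_i, s_i$ that combine with $\prod t^n s^n$ and with the stray $s^{n-1}$ in the final answer. The collapse $Z(S,D)/Z(A,D) Z(D,D) = 1/Z(E-S,D)$ highlighted in the text is the last crucial regrouping. After this bookkeeping, each Fredholm term assumes exactly the form stated in the theorem, and summing over $S \subset E$ and $T \subset B$ with $|S| = |T|$ completes the proof.
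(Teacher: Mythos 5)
Your outline follows the paper's own proof essentially step for step: the BE identity, the computation of $E(\phi)$ and of the Fourier coefficients of $\psi^{\pm 1}$ via the $\alpha$'s, the three-block decomposition of $K(g,h)$, the vanishing of the $b_i^{g+h}$ block, the column expansion of the Fredholm terms controlled by the two vanishing lemmas, the two Cauchy-type determinant evaluations, the cancellation of $l!$, and the $Z$-algebra cleanup ending with $Z(S,D)/\bigl(Z(A,D)Z(D,D)\bigr) = 1/Z(E-S,D)$. Two caveats. First, your primary argument for the vanishing of \eqref{vanishing-term} --- that the residues in $b_i$ ``cancel between'' the three kinds of summands --- cannot work as stated: each pole location occurs in exactly one summand (only $\alpha_{a_j}/(a_j(a_j-b_i))$ is singular at $b_i=a_j$, only $\alpha_{d_j^-}/(d_j(d_j-b_i))$ at $b_i=d_j$, only $\alpha_{d_j^+}/(1-b_id_j)$ at $b_i=1/d_j$), so there is nothing for any of them to cancel against. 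Your fallback --- realizing the sum over $j$ as the sum of residues of a single rational function and invoking the residue at infinity --- is viable; the paper's Appendix~B instead treats the whole expression as a rational function of $d_1,\dots,d_k$ and cancels every denominator factor by specialization, then identifies the leftover as $-\prod_j c_jb_j/(a_jd_j)$. Second, the BE identity is applied under the hypotheses that $|a_i|,|b_i|<1$ and that the $a_i,b_i,d_i$ are all distinct, which the theorem does not assume; the paper closes this by observing that $\det(T_n(\phi)+H_n(\phi))$ is a polynomial in the $a_i$ and $b_i$, so the identity extends by continuation --- you should add this final step.
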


\begin{proof}
The only thing to mention is that the application of the identity in the above setting required that all of the parameters had absolute value less than one and that the $a_{i}, b_{i}, d_{i}$ parameters were distinct. The Fourier coefficients of $\phi$ are polynomials in $a_{i}$ and $b_{i}$ and therefore the determinant of $T_{n}(\phi) + H_{n}(\phi)$ is also. Thus, the above 
holds for all $a_{i}$ and $b_{i}.$
\end{proof}

\begin{corollary}
Suppose $A = B$ and $C = D.$ Then \[ \det (T_{n}(\phi) + H_{n}(\phi))
 = \prod_{i=1}^{k}\frac{(1+c_{i})}{(1+a_{i})}\sum_{S \subset A} \prod_{a_{i}\in S} a_{i}^{2n+1}Z_O(A-S+S^{-1};C).\]
\end{corollary}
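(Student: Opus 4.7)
The plan is to derive the corollary by specializing the theorem above to $B=A$ and $D=C$ and showing that the double sum collapses to a sum over a single subset $S\subset A$. After the substitution, the distinctness hypothesis on the $d_i$ is preserved, and the polynomial-identity argument used at the end of the main theorem's proof allows $a_i=b_i$ coincidences without trouble.

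The first main step is to argue that only terms with $S=T\subset A$ contribute. I would show this by identifying explicit poles in the denominator $Z$-factors. If $S$ contains some $d_j\in D$, then $d_j^{-1}\in S^{-1}\subset B-T+S^{-1}$ and $d_j=c_j\in C$ make $Z(B-T+S^{-1},C)$ acquire a factor $(1-d_j^{-1}d_j)^{-1}=\infty$ in the denominator, so the term vanishes. For $S,T\subset A$ with $|S|=|T|$ but $S\neq T$, any $a\in S\setminus T$ gives $a\in A-T=B-T$ and $a^{-1}\in S^{-1}$, both inside $B-T+S^{-1}$, producing a pole in $Z_O(B-T+S^{-1})$. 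Hence only the diagonal $S=T\subset A$ survives.

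The second step is the algebraic simplification for $S=T\subset A$. Setting $V:=A-S+S^{-1}=B-T+S^{-1}$, one has $E-S+T^{-1}=V+D$, a disjoint union since $S\cap D=\emptyset$. Splitting each $Z(V+D,\cdot)$ via the multiplicativity $Z(X+Y,W)=Z(X,W)Z(Y,W)$ and using $C=D$ cancels $Z(D,D)$ against $Z(C,D)$ and one factor of $Z(V,D)$, reducing the $Z$-ratio to $Z_S(D)\,Z(V,V)/\bigl(Z(V,D)\,Z_O(V)\bigr)$. The identity $Z(V,V)=Z_O(V)Z_S(V)$ (from splitting the index product into $i<j$, $i=j$, $i>j$ pieces) together with the definition of $Z_O(V;D)$ then rewrites this as $Z_O(V;C)\prod_{v\in V}(1-v^2)^{-1}$.

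The final step is to absorb the extra $\prod_{v\in V}(1-v^2)^{-1}$ into the prefactors $(-1)^{|S|}\prod_{a\in S}a^{2n-1}\prod_i(1+c_i)\prod_{a\in A}(1-a)$ inherited from the main theorem. Splitting $V$ into $A-S$ and $S^{-1}$, using $(1-a^{-2})^{-1}=-a^2/(1-a^2)$, and applying $(1-a^2)=(1-a)(1+a)$ causes the signs to cancel and the product to telescope to $\prod_{a\in S}a^{2n+1}\cdot\prod_i(1+c_i)/(1+a_i)$, matching the corollary. I expect the main obstacle to be justifying the vanishing in the first step: setting $A=B$ and $C=D$ creates coincidences that formally produce indeterminate forms in the $Z$-ratio, and one must verify that these correspond to genuine zeros (most cleanly by reading off a single $\infty$ factor in a denominator $Z$, as above, or by approaching the specialization as a limit from generic parameters).
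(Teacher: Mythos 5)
Your proposal is correct and follows essentially the same route as the paper: the paper likewise kills all off-diagonal terms by pointing to the vanishing factors $\prod_{s_q\in S,\,c_p\in C}(s_q-c_p)\prod_{s_q\in S,\,b_p\in B-T}(s_q-b_p)$ (the same zeros you read off as poles of the denominator $Z$-factors), and then performs the identical $Z$-algebra, using $Z(V,V)=Z_O(V)Z_S(V)$ and absorbing $\prod_{v\in V}(1-v^2)^{-1}$ into the prefactors to produce $\prod_{a_i\in S}a_i^{2n+1}$ and $\prod_i(1+c_i)/(1+a_i)$.
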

\proof
First note that because of the factors $$\prod_{\substack{s_{q}\in S\\c_{p}\in C}}(s_{q}-c_{p})
\prod_{\substack{s_{q}\in S\\b_{p}\in B-T}}(s_{q}-b_{p})$$
we may assume that $S = T$ and that $S \subset A.$ Thus our sum is only over subsets of $A$. Letting $S = T$ we have 
\[  \sum_{S \subset A}(-1)^{|S|} \prod_{a_{i} \in S} a_{i}^{2n-1} 
\prod_{c_{i} \in C}(1 + c_{i})\prod_{a_{i} \in A}(1 -a_{i})\]
\[ \times \frac{Z_{S}(C)Z(C,C)Z(A + C - S +S^{-1}, A - S +S^{-1})}{Z(A + C - S +S^{-1},C)Z(A - S +S^{-1},C)Z_{O}(A - S +S^{-1})}.\]
\[= \sum_{S \subset A}(-1)^{|S|} \prod_{a_{i} \in S} a_{i}^{2n-1} 
\prod_{c_{i} \in C}(1 + c_{i})\prod_{a_{i} \in A}(1 -a_{i})\]
\[ \times \frac{Z_{S}(C)Z(C,C)Z(A - S +S^{-1}, A - S +S^{-1})Z(C, A - S +S^{-1})}{Z(A  - S +S^{-1},C)Z(C,C)Z(A - S +S^{-1},C)Z_{O}(A - S +S^{-1})}\]
\[ = \sum_{S \subset A}(-1)^{|S|} \prod_{a_{i} \in S} a_{i}^{2n-1} 
\prod_{c_{i} \in C}(1 + c_{i})\prod_{a_{i} \in A}(1 -a_{i})\]
\[ \times \frac{Z_{S}(C)Z(A - S +S^{-1}, A - S +S^{-1})}{Z(A  - S +S^{-1},C)Z_{O}(A - S +S^{-1})}\]
\[ = \sum_{S \subset A}(-1)^{|S|} \prod_{a_{i} \in S} a_{i}^{2n-1} 
\prod_{c_{i} \in C}(1 + c_{i})\prod_{a_{i} \in A}(1 -a_{i})\]
\[ \times \frac{Z_{S}(C)Z_{S}(A - S +S^{-1})}{Z(A  - S +S^{-1},C)}\]
\[ = \sum_{S \subset A}(-1)^{|S|} \prod_{a_{i} \in S} a_{i}^{2n-1} 
\prod_{c_{i} \in C}(1 + c_{i})\prod_{a_{i} \in A}(1 -a_{i})\]
\[ \times \frac{Z_{S}(C)Z_{O}(A - S +S^{-1})}{Z(A  - S +S^{-1},C)\prod_{a_{i}\in A - S}(1 -a_{i}^{2})\prod_{a_{i}\in  S}(1 -1/a_{i}^{2})}\]
\[= \prod_{i=1}^{k}\frac{(1+c_{i})}{(1+a_{i})}\sum_{S \subset A} \prod_{a_{i}\in S} a_{i}^{2n+1}Z_O(A-S+S^{-1};C).\]

\endproof

We end this section with some remarks about generalizing our main theorem. We required that the number of parameters $a_{i}, b_{i}, c_{i}$ and $d_{i}$ be the same to keep the proofs simpler. However the answer in the main theorem is independent of the number of parameters and is valid whenever it makes sense. 

There are also other classes of operators where one can apply the same techniques used in this paper, such as, for example, the difference of finite Toeplitz and Hankel matrices. The analogues of the BOCG identities can be found in \cite{BE4}, and then one would need to repeat the steps done in this paper.  

\section{Numerical examples, eigenvalues, and conjectures}

In order to check the derivation of the formula for the determinants of the finite Toeplitz plus Hankel matrices, we computed some small determinants for rational symbols exactly and then did a comparison with the formula of the previous section. The Fourier coefficients were exact rational numbers. Here are some examples.

\begin{example}
Let
\[ \phi(e^{i\theta}) = \frac{(1 - ae^{-i\theta})(1 - b e^{i\theta})}{(1 - c e^{-i\theta})(1 - de^{i\theta})}, \] 
where $a = 1/2, b=1/3, c=1/4, d=1/5$.  The determinant formula  (\ref{det_formula_k=1}) with $n=5$ gives
\[ \det(T_5(\phi) + H_5(\phi)) = \frac {51551341}{57712500} \approx 0.893244. \]
The Fourier coefficients of $\phi$, which can be computed with residues, are given by
\[\phi_k = \begin{dcases}   
   \frac{(1-a d) (d-b) d^{k-1}}{1-c d} & \text{if $k>0$}, \\
   \frac{(1-a d) (d-b)}{d (1-c d)}+\frac{b}{d} & \text{if $k=0$},\\
   \frac{(c-a) (1-b c) c^{| k| -1}}{1-c d} & \text{if $k < 0$}.
   \end{dcases}
 \]
 A computer calculation of $\det(T_5(\phi) + H_5(\phi))$, using exact arithmetic, gives the same rational number.
\end{example}

\begin{example}
Let
\[ \phi(e^{i\theta}) = \frac{(1 - ae^{-i\theta})(1 - b e^{i\theta})}{(1 - c e^{-i\theta})(1 - de^{i\theta})}, \] 
where $a = 2, b=1/3, c=1/4, d=1/5$.  The determinant formula  (\ref{det_formula_k=1}) with $n=5$ gives
\[ \det(T_5(\phi) + H_5(\phi)) = \frac {7571}{4617} \approx 01.63981. \]
 A computer calculuation of $\det(T_5(\phi) + H_5(\phi))$, using exact arithmetic, gives the same rational number.
\end{example}

One of Days' goals was to understand the eigenvalues of finite Toeplitz matrices with rational symbols as $n \rightarrow \infty$. He approached this by asking when the matrix
\[ T_{n}(\phi) - \lambda I = T_{n}( \phi - \lambda) \] 
could have determinant zero. Note that $\phi - \lambda$ is rational when $\phi$ is rational.  

Let $z = e^{i\theta}$ and write
\[ \phi(z) = \frac{f(z)}{g(z)} .\] Then
 \[ \phi(z)- \lambda  = \frac{f(z) - \lambda g(z)}{g(z)}. \] 

Suppose the degree of $f(z) - \lambda g(z)$ is $p$  and let the number of roots of $g(z)$ inside the unit circle be $k.$ 

We let $\,z_{i}(\lambda)\,\,i=1, \dots, p$ be the zeros of $\phi(z)- \lambda,$ ordered so that 
\[ |z_{1}(\lambda)| \leq |z_{2}(\lambda)| \leq \cdots \leq |z_{p}(\lambda)|.\]
Day argued that the determinant $D_{n}(\phi)$ could not be zero unless 
\[ |z_{k}(\lambda)|  =  |z_{k+1}(\lambda)|.\]
This defines a curve in the complex plane where the eigenvalues must accumulate.

To understand his reasoning recall
$$\phi(e^{i\theta} )= c_{0} \prod_{j = 1}^{p}(e^{i\theta} - r_{j})\prod_{j=1}^{h}( 1 - \frac{e^{i\theta}}{\rho_{j}})^{-1}\prod_{j = 1}^{k}(e^{i\theta}-\delta_{j})^{-1},$$
 where $|\rho_{j}| > 1,$ $|\delta_{j}| < 1$, and  $r_{j}$ are the zeros of $\phi.$ 
Then
$$D_{n}(\phi) = (-1)^{(p-k)(n+1)}\sum A_{M}r^{n}_{M}$$
where the sum is taken over all ${p \choose k}$ subsets  $M \subset \{1, \dots, ,p\}$ of cardinality $k$, and
$$r_{M} = c_{o} \prod_{j \in M^{c}} r_{j}, \,\,\,A_{M} = \prod_{\substack {j \in M^{c}, \,\alpha \in K \\ \beta \in H, \,i \in M}} \frac{(r_{j} - \delta_{\alpha})(\rho_{\beta} - r_{i})}{(\rho_{\beta} - \delta_{\alpha})(r_{j} - r_{i})},$$
with $M^{c}$ the complement of $M$, $K = \{1, \dots, ,k\}$, $H = \{1, \dots, ,h\}$.   

Thus, if 
\[ |z_{k+1}(\lambda)| > |z_{i}(\lambda)| \,\, i = 1, \dots, k\]
then the determinant of $T_n(\phi-\lambda)$ will not vanish for $n$ sufficiently large because \[ |z_{k+1}(\lambda) \,z_{k+2}(\lambda) \dots z_{p}(\lambda)|^{n} \] will be the dominating term, which corresponds to $M^c = \{k+1,\dots,p\}$.

For the other version of Day's formula, we start with 
\[\phi(e^{i \theta}) = \prod_{i=1}^{k}\frac{(1 - a_{i}e^{-i \theta})}{(1 - c_{i}e^{-i \theta})}\prod_{i = 1}^{p - k}(1 - b_{i}e^{i \theta}))\prod_{i = 1}^{h}(1 - d_{i}e^{i \theta})^{-1},
\] 
and consider $\phi(z)- \lambda$ as our new symbol. If we order the $a_{i}(\lambda)$ and $b_{i}(\lambda)$ as
\[ |a_{1}(\lambda) \leq |a_{2}(\lambda)| \leq \cdots \leq |a_{k}(\lambda)| \leq 1/|b_{1}(\lambda)| \leq 1/|b_{2}(\lambda)| \leq \cdots \leq 1/|b_{p -k}(\lambda)|,\]
then the determinant cannot vanish unless
\[  |a_{k}(\lambda)||b_{1}(\lambda)| = 1.\]

The Toeplitz plus Hankel case is more complicated. Recall that for $k=1$, 
\[ \begin{split} 
     \det(T_{n}(\phi) + H_{n}(\phi))  = &\frac{(1-b)(1+d)(1-cb)(1-ad)}{(1-bd)(1-ab)(1-cd)} \\
                                                    & + b^{n}d^{n}\frac{(1-ad)(b-d)(d-c)}{(d-a)(1-bd)(1-cd)} \\
                                                    & + b^{n}a^{n}\frac{(1-a)(a-c)(1+d)(b-d)}{(a-d)(1-ab)(1-cd)}.
     \end{split}
\]

Generically, the first term is nonzero, and then 
the above determinant will not vanish for large $n$ if $$|a(\lambda)||b(\lambda)| < 1 \,\,\,\mbox{and} \,\,\,  |d||b(\lambda)| < 1.$$

 However, if 
\[|a(\lambda)||b(\lambda)| = 1 \,\,\,\,\,\,\,\mbox{or} \,\,\,\,\,\,|d||b(\lambda)| = 1,\] 
 then the determinants could vanish. 

For general $k$ let 
\[ S = \{a_{i}(\lambda), \,\,1/b_{i}(\lambda), \,\,d_{i} \,\,| \,\,i = 1, \cdots , k \} \] and relabel those elements as $s_{i}(\lambda),\,\,i = 1, \cdots, 3k.$ If we arrange those elements in increasing order, then the determinant does not vanish unless
\[ |s_{2k}(\lambda)| = |s_{2k+1}(\lambda)| .\]

Following are eigenvalue plots for some Toeplitz plus Hankel matrices. The computational evidence (done with Mathematica) suggests that the limiting eigenvalues lie on the same curves as for the Toeplitz matrices. We do not have any examples to indicate otherwise, but we have not been able to rule out the possibility that the values of $|d_{i}|$ do  play a role. 

\begin{example}
We let
$$k = 1, \,\, n = 30,\,\,a = 1/5, \,\,b = i/2, \,\,c = 1/3, \,\,d = 1/4.$$ The curve is the image of the function. The red triangles are the T+H case and the blue circles, the Toeplitz case.

\vspace{.2in}
\centerline {
\includegraphics[width=3in]{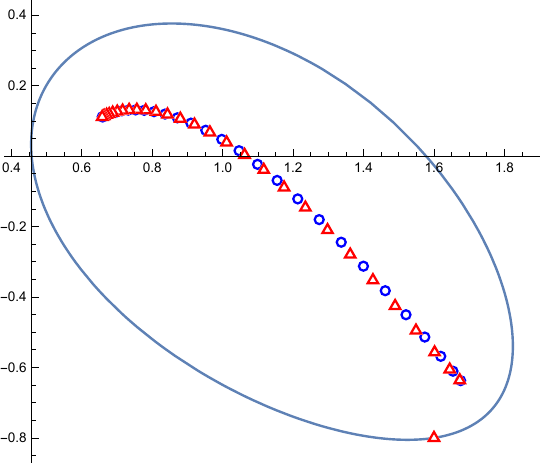}
}

\vspace{.2in}

\end{example}

\begin{example}
We let
$$k = 2, \,\, n = 50, \,\,a_{1} = 1/5, \,\,a_{2} = 3/5,\,\, b_{1} = i/2, \,\,c_{1} = 1/3, \,\,c_{2} = i/3, \,\,d_{1} = 1/4$$ 

\vspace{.2in}
\centerline {
\includegraphics[width=3in]{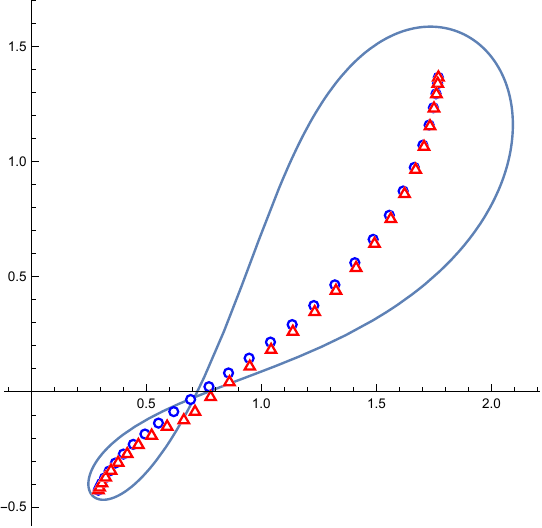}
}
\vspace{.2in}

\end{example}
Notice on the first plot there is an eigenvalue for the T+H case (and not the Toeplitz case) that lies on the curve itself. This appears to be at $\phi(1)$ and is present on many of our other numerical examples that are not contained here. 

To understand how this might occur let us go back to the case of $k = 1.$ The constant term in (\ref{det_formula_k=1}) has a factor of $1 -b.$ Letting $\lambda = \phi(1),$  the polynomial $$(z-a)(z-b) - \phi(1)(z-c)(z-d)$$ has a root at $z = 1.$ If this is the larger root in absolute value then $z = 1$ corresponds to $b = 1$ and thus the constant term vanishes and the other two terms in (\ref{det_formula_k=1}) tend to zero. 

\section{Appendices}
In the proof of the main theorem, two determinant identities were needed and in addition, an identity involving the Fourier coefficients of the function $\psi.$
One of the determinant identities was the standard one for a Cauchy determinant. The other for a Cauchy-type determinant we will prove here.
\subsection*{Appendix A: A Cauchy-type determinant identity}
\quad\\

If the $n \times n$ matrix $A$ has entries\[a_{i,j} = \frac{1}{(s_{i}-t_{j})(1-s_{i}t_{j})},\]  then
  \[  \det A = \frac{\prod_{i < j}(t_{i}-t_{j})(s_{j}-s_{i})(1- t_{i}t_{j})(1-s_{i}s_{j})}{\prod_{i,j}(s_{i}-t_{j})(1-t_{i}s_{j})}.\] 
 \proof
Note that if $s_{i} = s_{j}$ for any $i \ne j$, then two columns of the matrix are equal and the determinant is zero. The same holds if $t_{i} = t_{j}$ or $s_{i} = 1/s_{j}$ or  $t_{i} = 1/t_{j}$ for any $i \ne j.$
 
 Now think of the expanded determinant as rational function in the variables $s_{i}$ and $t_{i}.$ The denominator of the expanded determinant is $$\prod_{i,j}(s_{i}-t_{j})(1-t_{i}s_{j}).$$ By the above remarks, the numerator has factors of $$\prod_{i < j}(t_{i}-t_{j})(s_{j}-s_{i})(1- t_{i}t_{j})(1-s_{i}s_{j}).$$ 
 
 The degree of the denominator of the expanded determinant is $3n^{2}.$ The degree of the numerator is $3n^{2} - 3n = 3n(n-1).$ The degree of the expression $\prod_{i < j}(t_{i}-t_{j})(s_{j}-s_{i})(1- t_{i}t_{j})(1-s_{i}s_{j})$ is also $3n(n-1).$ Therefore we have
 \[  \det A = K\, \frac{\prod_{i < j}(t_{i}-t_{j})(s_{j}-s_{i})(1- t_{i}t_{j})(1-s_{i}s_{j})}{\prod_{i,j}(s_{i}-t_{j})(1-t_{i}s_{j})},\] for some constant $K.$ What is left to prove is that $K = 1.$ 
 
To see this, notice that if one expands the product $$\prod_{i < j}(t_{i}-t_{j})(s_{j}-s_{i})(1- t_{i}t_{j})(1-s_{i}s_{j}),$$ the terms of lowest orders come from choosing the ``$1$'' from all the factors of the form $(1- t_{i}t_{j})$ or $(1-s_{i}s_{j}).$ What is left is 
$\prod_{i < j}(t_{i}-t_{j})(s_{j}-s_{i})$ which is exactly the answer for a standard Cauchy determinant. But this is also true if we expand the determinant of $A$ and choose ``$1$'' in all the same factors in the numerator. Thus $K = 1.$
 \endproof  
 
\subsection*{Appendix B: Coefficient identity}
\quad\\

Recall that 
 \begin{align*}
 \al_{d^{+}_{j}} &=\prod_{i=1}^{k}\frac{(1-c_{i}d_{j})(1-b_{i}d_{j})(d_{j}-b_{i})}{(1- a_{i}d_{j})(1-d_{i}d_{j})}\prod_{i\neq j}\frac{1}{d_{j}-d_{i}} \\
 \al_{d^{-}_{j}} &= \prod_{i=1}^{k}\frac{(1-b_{i}d_{j})(d_{j}-c_{i})(d_{j}-b_{i})}{(d_{j}-a_{i})(1-d_{j}d_{i})}\prod_{i\neq j}\frac{1}{d_{j}-d_{i}} \\
 \al_{a_{j}} &= \prod_{i=1}^{k}\frac{(1-a_{j}b_{i})(a_{j}-c_{i})(a_{j}-b_{i})}{(1-d_{i}a_{j})(a_{j}-d_{i})}\prod_{i\neq j}\frac{1}{a_{j}-a_{i}}
 \end{align*}
 
Referring  to (\ref{vanishing-term}), we need to prove that 
 \[b_{i}\sum_{j=1}^{k}\left(- \al_{d^{-}_{j}}\frac{1}{d_{j}(d_{j}-b_{i})} - \al_{a_{j}}\frac{1}{a_{j}(a_{j}-b_{i})}   + \al_{d^{+}_{j}}\frac{1}{1- b_{i}d_{j}}\right) +   \prod_{j=1}^{k}\frac{b_{j}c_{j}}{a_{j}d_{j}} =0.\]
 
 We can regard 
\begin{equation}\label{monster} b_{i}\sum_{j=1}^{k}\left(- \al_{d^{-}_{j}}\frac{1}{d_{j}(d_{j}-b_{i})} - \al_{a_{j}}\frac{1}{a_{j}(a_{j}-b_{i})}   + \al_{d^{+}_{j}}\frac{1}{1- b_{i}d_{j}}\right)
 \end{equation}
 as a rational expression in the variables $d_{1}, \dots ,d_{k}.$
 Without loss of generality we assume $i=1$. Rewrite  (\ref{monster}) with a common denominator. Note that there are $3k$ terms in the numerator that are summed. The idea is to show that every factor of the denominator occurs in the numerator. 
  
 We start with the term $1-a_{1}d_{1}.$ Notice that in (\ref{monster}) only the two terms with factors of $\al_{a_{1}}$ and $\al^{+}_{d_{1}}$ have $1-a_{1}d_{1}$ in their denominators. Every other term will have $1-a_{1}d_{1}$ as a factor.
 
 The two remaining terms in the numerator will be, except for a common factor,
 \[\prod_{i=1}^{k}(1-c_{i}d_{1})(d_{1}-b_{i})\prod_{i\neq 1}(1- d_{1}b_{i})\prod_{i\neq 1}(a_{1}-a_{i})\prod_{i=1}^{k}(a_{1}-d_{i})\prod_{i\neq 1}(1-d_{i}a_{1})
 \]
 and
 \[d_{1}\prod_{i=1}^{k}(1-a_{1}b_{i})(a_{1}-c_{i})\prod_{i\neq 1}(d_{1}-d_{i})\prod_{i\neq 1}(a_{1}-b_{i})\prod_{i=1}^{k}(1-d_{1}d_{i}) \prod_{i\neq 1}(1-a_{i}d_{1}).
 \]
 If we let $d_{1} = 1/a_{1},$ then it is straight-forward to see that these are the same. Since we are subtracting these two terms in our expression we have a factor of $(1-d_{1}a_{1})$ in the numerator. 
This same proof works for any factor of the form $1-a_{i}d_{j}.$ 

Now we turn to a factor of the form $d_{j}-a_{i}.$ We may assume that $j=1$ and $i = 1$ and consider the terms involving $\al_{d_{1}^{-}}$ and $\al_{a_{1}}$. Using the same argument we have two terms in the numerator that do not have  $d_{j}-a_{i}$ as a factor. These, except for the common factors, are
\[ d_{1}\prod_{i=1}^{k}(1-a_{1}b_{i})(a_{1}-c_{i})\prod_{i\neq 1}(d_{1}-d_{i})\prod_{i\neq 1}(a_{1}-b_{i})\prod_{i=1}^{k}(1-d_{1}d_{i}) \prod_{i\neq 1}(d_{1}-a_{i})\]
and
\[a_{1}\prod_{i=1}^{k}(1-b_{i}d_{1})(d_{1}-c_{i})\prod_{i\neq 1}(d_{1}-b_{i})\prod_{i\neq 1}(a_{1}-a_{i})\prod_{i\neq1}^{k}(a_{1}-d_{i})\prod_{i\neq 1}(1-d_{i}a_{1}). 
 \]
Now let $d_{1} = a_{1}$ and we immediately see that these are the same. Since $\al_{d_{1}^{-}}$ has $d_{1}-a_{1}$ in its denominator and $\al_{a_{1}}$ has $a_{1}- d_{1}$ in its denominator the terms in the numerator cancel. Thus this shows that the terms in the summand have a factor of $d_{1}-a_{1}.$

The next factor that we consider is the type $1- d_{i}d_{j}.$ First we let $i=1,2$ and $j=1,2$ with $i \neq j.$ We have four terms in (\ref{monster}) with this factor in the denominator. Using the same approach as above, we consider the terms in the numerator involving $d^{-}_{1}$ and $d^{-}_{2}.$ These, except for common factors, are

\begin{align*}e_{1} =  \,\,\,& d_{2}\prod_{i=1}^{k}(1-b_{i}d_{1})(d_{1}-c_{i})\prod_{i\neq 1}(d_{1}-b_{i})\prod_{i=1}^{k}(1-a_{i}d_{1})\\&\times\prod_{i=1}^{k}(d_{2}-a_{i})\prod_{i > 2}(d_{2} -d_{i})\prod_{i  >1 }(1-d_{2}d_{i})\prod_{i =1}^{k}(1-a_{i}d_{2}),
\end{align*}
  \begin{align*}e_{2} = \,\,\,&d_{1}\prod_{i=1}^{k}(1-b_{i}d_{2})(d_{2}-c_{i})\prod_{i\neq 1}(d_{2}-b_{i})\prod_{i=1}^{k}(1-a_{i}d_{1})\\
 &\times \prod_{i=1}^{k}(d_{1}-a_{i})\prod_{i \neq 1}(d_{1} -d_{i})\prod_{i \neq 2 }(1-d_{1}d_{i})\prod_{i =1}^{k}(1-a_{i}d_{2}).
 \end{align*}
 For the terms involving  $d^{+}_{1}$ and $d^{+}_{2},$ we have
  \begin{align*} e_{3} = \,\,\,d_{1}d_{2}&\prod_{i=1}^{k}(1-c_{i}d_{1})(d_{1}-b_{i})\prod_{i\neq 1}(1- d_{1}b_{i})\prod_{i=1}^{k}(d_{1} -a_{i})\\
  \times &\prod_{i=1}^{k}(d_{2} - a_{i})\prod_{i=1}^{k}( 1- a_{i}d_{2})\prod_{i > 2 }(1-d_{2}d_{i})\prod_{i > 2}(d_{2} -d_{i}),
 \end{align*}
  \begin{align*}e_{4} = \,\,\,d_{1}d_{2}&\prod_{i=1}^{k}(1-c_{i}d_{2})(d_{2}-b_{i})\prod_{i\neq 1}(1- d_{2}b_{i})\prod_{i=1}^{k}(d_{1} -a_{i})\\
  \times &\prod_{i=1}^{k}(d_{2} - a_{i})\prod_{i=1}^{k}( 1- a_{i}d_{1})\prod_{i \neq 2 }(1-d_{1}d_{i})\prod_{i > 2}(d_{1} -d_{i}).\end{align*}
  
 We consider $e_{1}$ and $e_{4}$
and we replace $d_{1}$ with $1/d_{2}.$ Then these terms are the same and hence cancel in the numerator. 
The same argument works for the $e_{2}$ and $e_{3}$ terms above. 
 
 Now consider a term of the form $1-d_{1}^{2} = (1-d_{1})(1 + d_{1}).$
 The terms we need to consider arise from $\al_{1}^{+}$ and $\al_{1}^{-}.$ They are
\[\prod_{i=1}^{k}(1-b_{i}d_{1})(d_{1}-c_{i})\prod_{i\neq 1}(d_{1}-b_{i})\prod_{i=1}^{k}(1-a_{i}d_{1})\]
and
\[d_{1}\prod_{i=1}^{k}(1-c_{i}d_{1})(d_{1}-b_{i})\prod_{i\neq 1}(1- d_{1}b_{i})\prod_{i=1}^{k}(d_{1} -a_{i}).
\]
It is clear that if $d_{1} = \pm 1$ these two terms are the same and hence cancel in the summand.
Our final term to consider is one of the form $d_{1} - d_{2}$ The cancellation of these terms follows almost exactly as above and we leave it as an exercise for the reader. 

Now we once again think of our summand with all terms over a common denominator. All the terms involving $d_{j}$s have cancelled from the denominators except for the single factors of $d_{j}.$ It is not hard to see that the resulting rational function in the $d_{j}$s has a numerator with degree one less than the denominator in each of the $d_{j}$s. Hence we have a rational function of the form $$g(a_{i}, b_{i}, c_{i})/(d_{1}\cdots d_{k}).$$ It is also the case that the function $g$ must be the coefficient of the highest degree term in the polynomial of the numerator thought of as a function of $d_{1}$ divided by the coefficient of the highest degree term in the denominator. This value is $-\prod_{i=1}^{k}\frac{c_{i}b_{i}}{a_{i}}$ and that proves the statement.

While the reader may find the above argument a bit hard to follow, they may be reassured by the fact that the authors computed the $k=2$ case in Mathematica and found the computation of (\ref{monster}) to be correct. 

\subsection*{Appendix C: Mathematica Code for the determinant formula}

\begin{verbatim}
Z[a_, b_] := Product[1/(1 - a[[i]] b[[j]]), 
                     {i, 1, Length[a]}, {j, 1, Length[b]}]
ZO[a_] :=  Product[1/(1 - a[[i]] a[[j]]), 
                   {i, 1, Length[a] - 1}, {j, i + 1,  Length[a]}]
ZS[a_] := Product[1/(1 - a[[i]] a[[j]]), 
                  {i, 1, Length[a]}, {j, i, Length[a]}]
W[a_] := Product[1 - a[[i]], {i, 1, Length[a]}]
Y[a_] := Product[a[[i]], {i, 1, Length[a]}]

THDet[a_, b_, c_, d_, n_] :=   
    Module[{ad = Union[a, d], ssad, ssb},
    ssad[r_] := Subsets[ad, {r}];  
    ssb[r_] := Subsets[b, {r}];  
    W[-d]*Z[c, d]*ZS[d]*
    Sum[(-1)^r*Y[s]^(n - 1)*Y[t]^n*W[s]*W[Complement[b, t]]*
     Z[Union[Complement[ad, s], 1/t],Union[Complement[b, t], 1/s]]/
     (Z[Union[Complement[ad, s], 1/t], d]*
     Z[Union[Complement[b, t], 1/s], c]*
     ZO[Union[Complement[b, t], 1/s]]), 
     {r, 0, Min[Length[ad], Length[b]]}, {s, ssad[r]}, {t, ssb[r]}]]
\end{verbatim}
\newpage


\begin{thebibliography}{99}

\bibitem{BC}
E. Basor and B. Conrey,
\emph{Factoring determinants and applications to number theory},
Random Matrices Theory Appl. {\bf 13} (2024), no. 2, Paper No. 2450010, 33 pp.

\bibitem{BE4}  E. L. Basor and T. Ehrhardt, 
{\em Determinant computations for some classes of Toeplitz-Hankel matrices}, 
Oper.~Matrices {\bf 3} (2009), no. 2, 167--186.

 \bibitem{BW}
E. L. Basor and H. Widom,  {\em On a Toeplitz determinant identity of Borodin and Okounkov},  Int. Eqs. Operator Th. {\bf 37} (2000), 397--401.

\bibitem{BO}
A. Borodin and A. Okounkov, {\em A Fredholm determinant formula for Toeplitz determinants}, Int. Eqs. Operator Th. {\bf 37} (2000), 386--396.

\bibitem{Bot}
A. B\"ottcher,  {\em One more proof of the Borodin-Okounkov formula for Toeplitz determinants}, Int. Eqs. Operator Th. {\bf 41} (2001). 123--125.

\bibitem{BS99}
A. B\"ottcher and B. Silbermann,
{\em Introduction to large truncated Toeplitz matrices},
Universitext. Springer-Verlag, New York, 1999.

\bibitem{BS06}
A. B\"ottcher and B. Silbermann,
{\em Analysis of Toeplitz operators}, 2nd edition. Prepared jointly with A.~Karlovich.
Springer Monographs in Mathematics, Springer-Verlag, Berlin, 2006.

\bibitem{BG}
D. Bump and A. Gamburd,
{\em On the averages of characteristic polynomials from classical groups} 
Comm. Math. Phys. {\bf  265} (2006), no. 1, 227--274.

\bibitem{CFS} 
J. B. Conrey,  P. J. Forrester, and N. C. Snaith, 
{\em Averages of ratios of characteristic polynomials for the compact classical groups}, 
Int. Math. Res. Not. (2005), no. 7, 397--431.

\bibitem{CFZ}
 J. B. Conrey, D. W. Farmer, and M. R. Zirnbauer,
{\em Autocorrelation of ratios of L-functions}, Commun. Number Theory Phys. {\bf 2} (2008), 593--636.

\bibitem{CFKRS}
J. B. Conrey, D. W. Farmer, J. P. Keating, M. O. Rubinstein, and N. C. Snaith,
{\em Integral moments of L-functions}, Proc. London Math. Soc. (3)
{\bf 91} (2005), no. 1, 33--104.

\bibitem{CS07}
 J. B. Conrey and N. C. Snaith,
{\em Applications of the L-functions ratios conjectures},
 Proc. Lond. Math. Soc. (3) {\bf 94} (2007), no. 3, 594--646.

\bibitem{CS08}
J. B. Conrey and N. C. Snaith,
{\em  Correlations of eigenvalues and Riemann zeros}, 
Commun. Number Theory Phys. {\bf 2} (2008), no. 3, 477--536. 

\bibitem{D75a}
K. M. Day,
{\em Measures associated with Toeplitz matrices generated by the Laurent expansion of rational functions},
Trans. Amer. Math. Soc. {\bf 209} (1975), 175--183.

\bibitem{D75b}
K. M. Day,
{\em Toeplitz matrices generated by the Laurent series expansion of an arbitrary rational function},
Trans. Amer. Math. Soc. {\bf 206} (1975), 224--245.

\bibitem{GC}
J. F. Geronimo and K. M. Case,  {\em Scattering theory and polynomials orthogonal on the unit circle}, J. Math. Phys. {\bf 20} (1979), 299--310.

\bibitem{GK} 
 I. C. Gohberg and M. G.  Kre\u{i}n, {\em Introduction to the theory of linear nonselfadjoint operators}, Translations of Mathematical Monographs, Vol. 18, American Mathematical Society, Providence, 1969.

\bibitem{Wi76}
H. Widom,
{\em Asymptotic behavior of block Toeplitz matrices and determinants. II.},
Advances in Math. {\bf  21} (1976), no.~1, 1--29. 

\end{thebibliography}
 \end{document}